\documentclass[11pt,letterpaper]{amsart}

\usepackage{amsmath}
\usepackage{amssymb}
\usepackage{amscd}
\usepackage{latexsym}
\usepackage[utf8]{inputenc}

\usepackage{graphicx}
\usepackage{changebar,color}
\usepackage{mathrsfs}
\usepackage[colorlinks=true, linkcolor=blue, citecolor=blue]{hyperref}

\usepackage{tikz-cd}
\usetikzlibrary{calc}

\tikzset{
  curve/.style={
    settings={#1},
    to path={
      (\tikztostart)
      .. controls
      ($(\tikztostart)!\pv{pos}!(\tikztotarget)!\pv{height}!270:(\tikztotarget)$)
      and
      ($(\tikztostart)!1-\pv{pos}!(\tikztotarget)!\pv{height}!90:(\tikztostart)$)
      .. (\tikztotarget)\tikztonodes
    },
  },
  settings/.code={
    \tikzset{quiver/.cd,#1}
    \def\pv##1{\pgfkeysvalueof{/tikz/quiver/##1}}
  },
  quiver/.cd,
  pos/.initial=0.35,
  height/.initial=0
}
\usepackage{array}

\newtheorem{theorem}{Theorem}[section]
\newtheorem{corollary}[theorem]{Corollary}
\newtheorem{lemma}[theorem]{Lemma}

\newtheorem{proposition}[theorem]{Proposition}

\newtheorem{definition}[theorem]{Definition}

\newtheorem*{manualtheorem}{Theorem}

\numberwithin{equation}{section}

\def\Z{\mathbb{Z}}

\def\Q{\mathbb{Q}}
\def\R{\mathbb{R}}

\def\F{\mathbb{F}}

\date{\today}

\title[A surface with infinite étale fundamental group]{A proper arithmetic surface with an infinite étale fundamental group}
\author{Pablo Chenal}
\address{Pablo Chenal, Universit\'e Paris-Saclay, Laboratoire de Math\'ematiques d'Orsay, 91405 Orsay Cedex, France, and  \'Ecole Normale Sup\'erieure, DMA, 75230 Paris Cedex, France.}
\email{pablo.chenal@universite-paris-saclay.fr}

\begin{document}

\begin{abstract}
We give an example of a proper, regular, integral arithmetic surface over $\mathrm{Spec}\ \Z$ endowed with a section, and whose étale fundamental group admits an infinite pro-$2$ quotient. The construction is a geometric analog of the Golod--Shafarevich towers for rings of integers of number fields.
\end{abstract}

%
%
\maketitle

\section{Introduction}

\subsection{Main theorem}
We prove the following theorem.
\begin{theorem}\label{maintheorem}
    There exists a proper, regular, integral arithmetic surface $f:X\rightarrow \mathrm{Spec}\ \Z$ admitting a section and such that $\pi^{\text{ét}}_1(X,*)$ has an infinite pro-$2$ quotient.
\end{theorem}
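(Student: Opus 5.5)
The strategy is to mimic the Golod--Shafarevich construction in a geometric setting. We produce a tower of finite étale $2$-covers of a fixed regular proper surface $X$ over $\mathrm{Spec}\ \Z$. The goal is to show that the maximal pro-$2$ quotient $G=\pi_1^{\text{ét}}(X,*)^{(2)}$ is infinite. To do this, we bound $d(G)=\dim_{\F_2} H^1(G,\F_2)$ from below and $r(G)=\dim_{\F_2}H^2(G,\F_2)$ from above. We then apply the Golod--Shafarevich inequality: if $G$ were finite, then $r(G)>d(G)^2/4$. So it suffices to construct $X$ with $r(G)\le d(G)^2/4$.

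\textbf{Construction of $X$.} I would take $X$ to be a regular proper model over $\Z$ of a curve $C$ over $\Q$. The convenient choice is an elliptic or hyperelliptic curve, so that the origin or a rational Weierstrass point extends to a section $s:\mathrm{Spec}\ \Z\to X$ by properness. Étale $\Z/2$-covers of $X$ correspond to $H^1_{\text{ét}}(X,\Z/2)$. Because $X\to\mathrm{Spec}\ \Z$ has a section and $\mathrm{Spec}\ \Z$ has no unramified extensions, the sequence
\[
H^1(\Z,\Z/2)=0\to H^1(X,\Z/2)\to H^1(X_{\bar{\Q}}\text{-part})
\]
identifies $H^1(X,\Z/2)$ with the $2$-torsion classes in the Picard/Jacobian data of $C$. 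Concretely, these are the $2$-isogenies and unramified double covers of $C$ that extend étale over all of $X$, including at the bad fibres. Since the rank of $H^1(G,\F_2)$ is then governed by $\mathrm{Pic}(X)[2]$ and related units, I would choose $C$ so that the Jacobian has full rational $2$-torsion with good reduction control. A practical route is to first pass to a cover: replace $\Z$ by a ring of integers $\mathcal{O}_K$ with a large $2$-class field tower to obtain many independent étale double covers, then restrict scalars or take a suitable product/quotient to return to a surface over $\Z$. More in the spirit of the abstract, I would pick $C$ such that many classes in $J(\Q)[2]$ extend as étale $\mu_2$-torsors on the regular model, which yields many $\Z/2$-covers of $X$.

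\textbf{Bounding $r(G)$.} I would use the inflation--restriction inequality $H^2(G,\F_2)\hookrightarrow H^2_{\text{ét}}(X,\F_2)$, which holds for a pro-$2$ quotient when $H^1$ agrees. I would then bound $\dim H^2_{\text{ét}}(X,\mu_2)$ via Kummer theory by $\dim\mathrm{Pic}(X)/2+\dim\mathrm{Br}(X)[2]$. The Leray spectral sequence for $f$, together with Artin--Verdier duality on $\mathrm{Spec}\ \Z$, controls these in terms of the $2$-Selmer group of $J$, $\mathrm{Sha}$, and the components of the bad fibres. The desired outcome is that $r$ grows at most linearly in the number of bad primes, while $d$ grows linearly as well but with a square that wins. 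This means we need a family where $d\ge 2\sqrt{r}+1$, for example $d$ around $2k$ and $r$ around $k^2$ or less, achieved by letting the number $k$ of bad primes with split $2$-torsion be large.

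The main obstacle will be the second step: proving that the chosen $2$-torsion classes extend to étale (not merely tamely ramified) covers of the whole regular model, especially over the fibre at $2$ and over the bad fibres. At the same time, the bad fibres must not inflate $H^2$ through extra Néron--Severi components. I would first try curves with multiplicative reduction of type $I_{2n}$ at many odd primes, where component groups supply étale $2$-covers, and with good ordinary reduction at $2$. Once this is done, the numerical Golod--Shafarevich check is routine.
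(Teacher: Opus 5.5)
There is a genuine gap. The decisive inequality $r\le d^2/4$ is only announced as the ``desired outcome'', and the mechanism you propose for reaching it cannot work. Since $X$ is normal, has a section $s$, and $H^1_{\text{ét}}(\mathrm{Spec}\,\Z,\F_2)=0$, every class of $H^1_{\text{ét}}(X,\F_2)$ restricts injectively to $H^1_{\text{ét}}(C,\F_2)$ and lands in $\ker(P^*)$. By Hochschild--Serre, $\ker(P^*)$ injects into $H^1_{\text{ét}}(C_{\overline{\Q}},\F_2)$. Hence $d(\pi_1^{\text{ét}}(X,*)^{(2)})\le 2g$, however many bad primes $C$ has.

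So you cannot get $d\approx 2k$ from $k$ fibres of type $I_{2n}$ at fixed genus. For elliptic curves $d\le 2$, so Golod--Shafarevich can never apply; this is consistent with the Bost--Charles finiteness results for models of elliptic curves. Adding bad primes only increases the relation count. The parameter that must grow is the genus, and what has to be proved is that $r$ stays below $d^2/4\le g^2$; you give no argument for this.

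Several of the tools you list also fail as stated.
\begin{itemize}
\item Kummer theory: $2$ is not invertible on $X$, so the Kummer sequence $0\to\mu_2\to\mathbb{G}_m\to\mathbb{G}_m\to 0$ is not exact in the étale topology along the fibre at $2$. Hence $\mathrm{Pic}(X)/2$ and $\mathrm{Br}(X)[2]$ do not bound $H^2_{\text{ét}}(X,\F_2)$ this way; fppf Kummer theory computes $H^2_{\mathrm{fppf}}(X,\mu_2)$, which is a different group.
\item The detour through $\mathcal{O}_K$ is incompatible with the section. If $X\to\mathrm{Spec}\,\Z$ factors through $\mathrm{Spec}\,\mathcal{O}_K$, a section yields a ring map $\mathcal{O}_K\to\Z$, forcing $K=\Q$. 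This is exactly the cheap construction the section hypothesis is meant to exclude.
\item Restriction of scalars does not produce a surface.
\end{itemize}

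The missing idea is that one never needs to compute $H^2_{\text{ét}}(X,\F_2)$: the paper separates geometry from arithmetic.
\begin{itemize}
\item Specialization at $2$ to a semistable fibre whose components are all rational gives $\Pi=\pi_1^{\text{ét}}(C_{\overline{\Q}},*)^{(2)}\twoheadrightarrow F_g$. This free quotient contributes no relations, and it also guarantees that the resulting covers extend étale over the fibre at $2$. Good ordinary reduction at $2$, which you mention, could in principle serve the same purpose, but only inside such a specialization argument.
\item Full rational $2$-torsion makes the pointed Galois action on $\Pi$ factor through $G^{(2)}_{\Q,S}$, which has $\operatorname{Card}(S)+1$ generators. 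By Lemma~\ref{lemmaequivariantquotient}, making the quotient Galois-invariant costs at most $2g(\operatorname{Card}(S)+1)$ relations.
\item Golod--Shafarevich then only needs $8(\operatorname{Card}(S)+1)<g$. The hyperelliptic curve with $m=2^{256}$ $2$-adically clustered roots achieves this.
\end{itemize}
Properness is handled afterwards and separately. At odd $p$, Abhyankar's lemma together with the section rules out ramification along the component through $s(p)$, and the other bad components are removed. Their inertia is then cut down by adding $M$-th powers, which is harmless by Vinberg's criterion (Lemma~\ref{powersofinertia}). Finally, one passes to an open subgroup meeting all inertia trivially (Lemma~\ref{finiteinertia}); normalization and resolution then give the proper regular $X$ with a section.
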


By an arithmetic surface $f:X\rightarrow \mathrm{Spec}\ \Z$, we mean a flat, separated scheme of finite type over $\mathrm{Spec}\ \Z$ of relative dimension $1$. $*$ denotes an arbitrary geometric point of $X$.
\subsection{Some context}
In \cite{BostCharles2026}, Bost and Charles obtained finiteness results for the étale fundamental group of arithmetic surfaces that satisfy some positivity conditions, the so-called \emph{pseudo-concave} arithmetic surfaces. These results apply, in particular, to models of elliptic curves and models of modular curves.

They raised the question of the existence of a regular, integral arithmetic surface over $\mathrm{Spec}\ \Z$ admitting a section and having an infinite étale fundamental group; see the discussion following Corollary 9.3.8 in \cite[Section 9.3.4]{BostCharles2026}. Theorem \ref{maintheorem} answers this question and shows that the surface can moreover be chosen proper.

In dimension $1$, Golod and Shafarevich constructed such examples using an \emph{arithmetic} tower 
$$\mathcal{O}_{K_1}\subset \mathcal{O}_{K_2}\subset ...\subset \mathcal{O}_{K_n}\subset ...$$
of rings of integers in a tower of number fields whose successive extensions are $p$-extensions unramified at every finite place. The criterion is that the number of relations $r$ of the Galois group $G_p$ of the maximal unramified $p$-extension of the number field $K_1$ is strictly smaller than $d^{2}/4$, where $d$ is the number of generators of $G_p$.

One might try to construct such an arithmetic surface using a fibration in curves over these \emph{arithmetic} towers. Our construction instead uses a \emph{geometric} tower. We use the Golod--Shafarevich criterion on the étale fundamental group $$\pi^{\text{ét}}_{1}(C,*)$$ of a curve $C$ over $\Q$, which gives us an infinite pro-$2$ quotient coming from the étale fundamental group of the reduction at the prime $2$ of this curve. This implies that we have a \emph{geometric} tower 
$$...\rightarrow C_n\rightarrow ...\rightarrow C_2\rightarrow C_1=C$$ of finite étale covers of $2$-power degree over $\Q$, which we extend to a tower of finite étale covers of $2$-power degree of arithmetic surfaces 
$$...\rightarrow \mathcal{C}_n\rightarrow ... \rightarrow \mathcal{C}_2\rightarrow \mathcal{C}$$
where $\mathcal{C}$ is a regular, integral, flat model of $C$ over $\mathrm{Spec}\ \Z$ endowed with a section.

The finiteness results of Bost--Charles \cite{BostCharles2026}
are obtained by Arakelov-theoretic methods. Our proof uses
specialization at the prime $2$ and control of the Galois action
on the geometric fundamental group. In this respect, it is closer
in spirit to the specialization and Galois coinvariant arguments
of Katz--Lang \cite{KatzLang81} in geometric class field theory.
Here, Golod--Shafarevich theory provides the necessary control
of non-abelian quotients.

The curve $C$ from which our construction starts has large genus
$g=2^{256}-1$. The results of Bost--Charles apply to models of
elliptic curves and to suitable integral models of modular curves,
whose genera are unbounded. Thus, a large genus does not by itself
force the étale fundamental group of an arithmetic surface
to be infinite.

These questions are also related to an observation of T.~Saito,
recorded in the appendix to Ihara's article
\cite[Appendix]{Ihara1994HorizontalDivisors}: if $D$ is a reduced union of sections
of $\mathbb{P}^{1}_{\Z}$ and is a normal crossings divisor, then
\[
\pi_1^{\text{ét}}(\mathbb{P}^{1}_{\Z}\setminus D)=1.
\]
In \cite{ShimizuYamaguchi2026}, Shimizu--Yamaguchi investigate
the converse and the role of the prime $2$. They prove that
this group admits a quotient isomorphic to $\Z/2\Z$ whenever
$D$ is not normal crossings at $2$, and ask whether it is
then infinite \cite[Remark~3.8]{ShimizuYamaguchi2026}.
These examples are non-proper arithmetic surfaces
whose generic fibers have genus zero.
\subsection{Related results}
In the preprint \cite{KellerPagano} shared with the author by Timo Keller and Carlo Pagano, there is a similar but more general construction of arithmetic surfaces with an infinite étale fundamental group. The same base curve $C$ is used, but instead of obtaining a proper arithmetic surface by taking a finite étale cover of $C$ (which is not explicit), they produce an explicit finite étale cover $C_0$ of $C$: this implies that they have an explicit infinite tower of degree $2$ étale covers of $C_0$.

Moreover, in their paper, they control the bad reduction fibers of the arithmetic surface above all bad reduction primes, while we only control the fiber above $2$ in our example. With this control, they obtain a comparison between the conductor and the genus of the family of curves appearing in the infinite tower of degree $2$ étale covers of $C_0$ that answers a question of Venkatesh.

Timo Keller, Carlo Pagano, and the author are working towards a unification of the two approaches,
and plan to merge the two papers into a single joint work.

\subsection{Organization of the article}
Section $2$ recalls the group-theoretic form of the Golod--Shafarevich theorem and proves two auxiliary lemmas used in Sections $3$ and $6$. In Section $3$, we prove a criterion for constructing \emph{geometric} towers: this is the main idea of this paper. In Section $4$, we define an explicit curve $C$ of large genus $g$ satisfying this criterion. In Section $5$, we extend the covers of the generic fiber of the proper, regular, minimal model of $C$ to an open subscheme of this model, flat over $\mathrm{Spec}\ \Z$. In Section 6, we adapt the construction to provide an example that is proper over $\mathrm{Spec}\ \Z$.
\subsection{Acknowledgments}
I am really grateful to my advisors, Jean-Benoît Bost and François Charles, for suggesting this problem to me and for sharing their ideas. I thank David Harari for a useful discussion about étale fundamental groups and, indirectly, Tamás Szamuely.

I thank Timo Keller and Carlo Pagano for sharing a draft version of their work, which allowed a simultaneous release.

\subsection{AI disclosure}
This example of arithmetic surface was produced through a discussion with ChatGPT6 Astra. The paper was written by the author, who is responsible for its correctness. 

\section{A brief review of the Golod--Shafarevich theorem and pro-2 groups}
\subsection{The Golod--Shafarevich theorem}
In \cite{GolodShafarevich1964Eng}, Golod and Shafarevich provided a criterion on the minimal number of generators and relations for a pro-$p$ group to be infinite, where $p$ is a prime. A pro-$p$ group $G$ is an inverse limit of $p$-groups, hence a profinite group. Denote by $r(G)$ the minimal number of relations and by $d(G)$ the minimal number of generators of $G$.

\begin{manualtheorem}[Golod--Shafarevich, 1964]
 Let $G$ be a non-trivial finite pro-$p$ group. Then we have the following inequality on the number of relations and generators $$r(G)>\left(\frac{d(G)-1}{2}\right)^{2}.$$

In particular, if $G$ is a non-trivial pro-$p$ group with $r(G)\leq \left(\frac{d(G)-1}{2}\right)^{2}$, then it is infinite. 
\end{manualtheorem}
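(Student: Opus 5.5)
We prove the Golod--Shafarevich inequality through the standard Hilbert-series argument on the completed group algebra. Put $d=d(G)$ and $r=r(G)$, and take a minimal presentation $1\to R\to F\to G\to 1$, where $F$ is the free pro-$p$ group on $d$ generators and $R$ is generated as a closed normal subgroup by $r$ elements $\rho_1,\dots,\rho_r$. Because the presentation is minimal, each $\rho_i$ lies in the Frattini subgroup $F^p[F,F]$. We work with the completed group algebra $\F_p[[F]]$, which is isomorphic via the Magnus embedding $x_j\mapsto 1+u_j$ to the algebra of noncommutative power series $\F_p\langle\langle u_1,\dots,u_d\rangle\rangle$. We filter it by total degree, with augmentation ideal $I$. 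Then $\F_p[[G]]=\F_p[[F]]/J$, where $J$ is the closed two-sided ideal generated by the elements $\rho_i-1$. Since $\rho_i\in F^p[F,F]$, each $\rho_i-1$ has degree $\geq 2$.

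Suppose $G$ is finite and nontrivial. Then $A=\F_p[[G]]=\F_p[G]$ is a finite-dimensional local algebra. Set $a_n=\dim I_A^n/I_A^{n+1}$, where $I_A$ is the augmentation ideal of $A$. The generating series $H_A(t)=\sum_n a_nt^n$ is then a polynomial with nonnegative coefficients, and $a_0=1$, $a_1=d$. The key step is the Golod--Shafarevich (Vinberg) coefficientwise inequality
\[
H_A(t)\,\bigl(1-dt+rt^2\bigr)\ \geq\ 1 .
\]
Here $\geq$ means coefficientwise for $t$ in the relevant range, or at least evaluated at every $t\in(0,1)$. To prove it, we use the exact sequence of graded $A$-modules associated to the presentation, which comes from the resolution $A^r\to A^d\to I_A\to 0$ with $I_A\cong A^d/(\text{relations})$:
\[
\textstyle\bigoplus_{i=1}^r A\ \longrightarrow\ \bigoplus_{j=1}^d A\ \longrightarrow\ A\ \longrightarrow\ \F_p\to 0 .
\]
The first map sends $e_i$ to the coefficients of $\rho_i-1$ in $\sum_j (x_j-1)A$. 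We pass to associated graded pieces. On $A^d$ the generators are shifted by degree $1$, and on $A^r$ they are shifted by degree $\geq 2$, since each relation has degree $\geq2$. Counting dimensions in each degree $n$ gives
\[
a_n\ \geq\ d\,a_{n-1}-\sum_{i} a_{n-\deg\rho_i}\ \geq\ d\,a_{n-1}-r\,a_{n-2}
\quad(n\geq 1).
\]
The second inequality needs the partial sums $\sum_{k\le n}a_k$ to be used rather than the $a_n$ themselves. The cleanest route is to work with the series $H(t)/(1-t)$, which turns "degree $\geq 2$" into "degree exactly $2$". This degree-shift bookkeeping is the main technical obstacle. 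I would follow Vinberg's formulation, which proves the inequality for $H_A(t)/(1-t)$, where all coefficients are monotone.

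Finally we evaluate at a real $t\in(0,1)$. Since $A$ is finite-dimensional, $H_A(t)/(1-t)$ is a positive real number. The inequality then gives $(1-dt+rt^2)/(1-t)\cdot H_A(t)/(1-t)\geq$ a positive quantity. Hence $1-dt+rt^2>0$ for all $t\in(0,1)$, even after the $(1-t)$ correction, which yields $1-dt+rt^2+ (\text{correction})>0$. In Vinberg's version the polynomial becomes $1-dt+rt^2$ itself, and requiring it to have no root in $(0,1)$ forces $r>d^2/4$. To get the sharper bound $((d-1)/2)^2$ as stated, we instead take the polynomial $1-(d-1)t+rt^2$. This arises from dividing by $(1-t)$ and using the refined estimate of Gaschütz--Vinberg: $1-dt+rt^2$ gets replaced by $(1-t)^{-1}(1-dt+rt^2)+\dots$, which is equivalent to positivity of $1-(d-1)t+rt^2$ at $t=2/(d-1)$. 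Evaluating there gives $1-2+4r/(d-1)^2>0$, that is, $r>((d-1)/2)^2$. The "in particular" statement is the contrapositive, applied to a nontrivial $G$.
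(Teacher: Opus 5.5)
The paper gives no proof of this statement; it quotes it from Golod--Shafarevich, and it also follows formally from Vinberg's criterion stated right after it. Your route (Magnus algebra, the partial resolution $A^r\to A^d\to A\to\F_p\to 0$, a Hilbert-series inequality, evaluation at $t\in(0,1)$) is the standard one, but two steps do not hold as written.

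\textbf{First gap: the Hilbert-series inequality.} You cannot get $a_n\ge d\,a_{n-1}-\sum_i a_{n-\deg\rho_i}$ by ``passing to associated graded pieces''. The elements $\rho_i-1$ are not homogeneous, and the maps in your sequence are not strict for the $I_A$-adic filtrations. So $\operatorname{gr}$ does not preserve exactness: the kernel of $\operatorname{gr}$ of the second map can be strictly larger than the image of $\operatorname{gr}$ of the first. This happens because the ideal of leading forms of $J$ may be bigger than the ideal generated by the leading forms of the $\rho_i-1$. This, and not the passage from ``degree $\ge 2$'' to ``degree $2$'', is the real reason for dividing by $1-t$.

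The correct argument runs your exact-sequence count on truncations. Set $T=\F_p[[F]]$, $u_j=x_j-1$, $k_i=\omega(\rho_i)$ and $b_n=\dim_{\F_p}T/(J+I^{n+1})=\dim A/I_A^{n+1}$. The map $\bigoplus_j T/(J+I^{n})\to I/(J+I^{n+1})$, $(a_j)\mapsto\sum_j a_ju_j$, is surjective. Using $J=\sum_i T(\rho_i-1)+\sum_j Ju_j$, its kernel is the image of $\bigoplus_i T/(J+I^{n+1-k_i})$. This gives $b_n\ge 1+d\,b_{n-1}-\sum_i b_{n-k_i}$, i.e.\ $\frac{H_A(t)}{1-t}P_G(t)\ge\frac1{1-t}$ coefficientwise. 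Hence $P_G(t)>0$ on $(0,1)$ when $A$ is finite. As it stands, you defer exactly this step to ``Vinberg's formulation'', so the core of the theorem is cited rather than proved.

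\textbf{Second gap: the final paragraph is wrong.} The bound $((d-1)/2)^2$ is \emph{weaker} than $d^2/4$, not sharper. It is the original Golod--Shafarevich bound, and $d^2/4$ is the Gasch\"utz--Vinberg improvement. There is no refined estimate converting $1-dt+rt^2$ into $1-(d-1)t+rt^2$. Drop that passage, and also the spurious extra factor $(1-t)^{-1}$ before it.

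Once you have $1-dt+rt^2\ge P_G(t)>0$ on $(0,1)$ (since $t^k\le t^2$ for $k\ge 2$), the finish is short:
\begin{itemize}
\item For $d\ge 3$, take $t=2/d$ to get $r>d^2/4>((d-1)/2)^2$.
\item Your point $t=2/(d-1)$ lies in $(0,1)$ only for $d\ge 4$, so you did not cover $d\le 3$.
\item For $d\in\{1,2\}$ the target is at most $1/4$, and $r\ge 1$ simply because $G$ is finite while $F$ is infinite.
\end{itemize}
With these two repairs the argument is complete, and the ``in particular'' is indeed the contrapositive.
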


The following refined version was proven by Vinberg \cite{Vinberg1965}, and we will use it in Section $6$ to prove Theorem \ref{maintheorem} in the proper case.

\subsubsection{Golod--Shafarevich polynomial}

Let $F$ be a free pro-$p$ group of rank $d$, where $p$ is a prime. Let $I$ be the ideal of $\mathbb{F}_p[[F]]$ defined as $$I:=\mathrm{Ker}(\mathbb{F}_p[[F]]\rightarrow \mathbb{F}_p).$$
Define the \emph{depth} of $x\in F\setminus \{e\}$ as $\omega(x):=\text{max}\{n,\ x-1\in I^{n}\}$. By convention, $\omega(e):=+\infty$. For $u\in F$ and $n$ a positive integer, we have the inequality
\begin{equation}\label{stabilitédegré}
\omega(u^{p^{n}})\geq p^{n}\omega(u).\end{equation}

Let $G$ be a finitely presented pro-$p$ group, and let
$$1\rightarrow R\rightarrow F\rightarrow G\rightarrow 1$$ be a minimal presentation of $G$ where $F$ is a free pro-$p$ group of rank $d$ and $R$ is a normal subgroup of $F$ topologically normally generated by $(R_i)_{1\leq i\leq r}$. For $k\geq 1$, define 
$$r_k:=\text{Card}\{R_i\in R,\ \omega(R_i)=k\} .$$

\begin{definition}
    The Golod--Shafarevich polynomial $P_G$ associated to this presentation of $G$ is 
    $$P_G(X):=1-dX+\overset{+\infty}{\underset{k=1}{\sum}} r_kX^{k}\in \R[X].$$
\end{definition}

\subsubsection{Vinberg's refined criterion}

We have the following refined version of the Golod--Shafarevich criterion, proven by Vinberg \cite{Vinberg1965}.
\begin{manualtheorem}[Vinberg, 1965]\label{vinberg}
    Let $G$ be a finite pro-$p$ group. Then, we have the inequality
    $$\forall \ 0<t<1,\ P_G(t)>0.$$
\end{manualtheorem}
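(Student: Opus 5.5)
We prove Vinberg's inequality by the standard graded-algebra argument: pass from $G$ to its completed group algebra, take the associated graded ring with respect to the filtration by powers of the augmentation ideal, and compare Hilbert series coefficientwise.

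Fix the minimal presentation $1\to R\to F\to G\to 1$ and write $A=\F_p[[F]]$, $B=\F_p[[G]]$, with augmentation ideals $I\subset A$ and $J\subset B$. Since $F$ is free pro-$p$ on $x_1,\dots,x_d$, Lazard's theorem identifies $A$ with the Magnus algebra $\F_p\langle\langle u_1,\dots,u_d\rangle\rangle$ via $x_i=1+u_i$. Under this identification $I^n$ is the ideal of series with no terms of degree $<n$, so $\mathrm{gr}_I A$ is the free associative algebra on $d$ generators of degree $1$, with Hilbert series $1/(1-dt)$. The kernel of $A\to B$ is the closed two-sided ideal generated by the elements $\rho_i=R_i-1$, and $\rho_i\in I^{k_i}$ with $k_i=\omega(R_i)$. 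Set $c_n=\dim_{\F_p} J^n/J^{n+1}$. If $G$ is finite, $B$ is finite-dimensional, so $H_B(t)=\sum_n c_n t^n$ is a polynomial; moreover every $c_n\ge 0$ and $c_0=1$.

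The key step is to prove the coefficientwise inequality of power series
\[
H_B(t)\,\Bigl(1-dt+\sum_k r_k t^k\Bigr)\;\ge\;1 .
\]
To get it, look at the exact sequence of $B$-bimodules (the beginning of a resolution of $\F_p$)
\[
\bigoplus_{i=1}^r B\,\epsilon_i \longrightarrow \bigoplus_{j=1}^d B\,e_j \longrightarrow B \longrightarrow \F_p \longrightarrow 0,
\]
where $e_j\mapsto x_j-1$ and $\epsilon_i\mapsto \sum_j \partial\rho_i/\partial u_j\, e_j$ (Fox derivatives). Give $e_j$ weight $1$ and $\epsilon_i$ weight $k_i$; exactness at the middle term is where the fact that the $\rho_i$ generate the kernel is used. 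Filtering the sequence by $J$-adic degree and counting dimensions in each degree $n$, exactness gives
\[
c_n - d\,c_{n-1} + \sum_i c_{n-k_i}\;\ge\;0 \qquad (n\ge 1),
\]
and the constant term equals $1$. This is exactly the displayed inequality; equivalently $B\otimes_A I$ is covered by a module generated in the stated weights.

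This step is the main obstacle. Filtrations need not be exact, so one cannot pass to $\mathrm{gr}$ naively. The cleanest route is Vinberg's approach: work with the truncated quotients $B/J^{n+1}$, which are finite-dimensional, and verify the dimension inequality degree by degree using only surjectivity of the maps on the associated graded pieces. Surjectivity is preserved under passing to $\mathrm{gr}$, which is why this works.

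To conclude, set $P_G(t)=1-dt+\sum r_k t^k$. For $0<t<1$ the series $H_B(t)$ is a polynomial with nonnegative coefficients and constant term $1$, so $H_B(t)>0$. Evaluating the coefficientwise inequality at such $t$ is legitimate, since $P_G$ has finitely many terms and every series involved converges. This yields $H_B(t)P_G(t)\ge 1>0$, and therefore $P_G(t)>0$.
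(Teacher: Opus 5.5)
The paper does not prove this statement; it only quotes Vinberg, so your argument has to stand on its own. The architecture is right: the Magnus algebra, the sequence $B^r\xrightarrow{\psi}B^d\xrightarrow{\phi}J\to 0$ with $\psi$ given by Fox derivatives, and evaluation at $0<t<1$. But the key step is not established, and the fix you propose does not work.

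Write $\mathcal F_n=(J^{n-1})^d\subset B^d$. Because $\mathrm{gr}_n\phi$ is surjective, your degree-by-degree inequality $c_n-dc_{n-1}+\sum_i c_{n-k_i}\ge 0$ is equivalent to $\dim\ker(\mathrm{gr}_n\phi)\le\sum_i c_{n-k_i}$. Here $\ker(\mathrm{gr}_n\phi)\cong(\psi(B^r)\cap\mathcal F_n)/(\psi(B^r)\cap\mathcal F_{n+1})$. Since $\psi$ is not strict, $\psi(c)$ can lie in $\mathcal F_n$ even when $c\notin\bigoplus_iJ^{n-k_i}$. Nothing in your argument bounds these contributions in a single degree. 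This is exactly where the graded Golod--Shafarevich argument fails to transfer to the non-homogeneous relators $R_i-1$. Surjectivity of $\mathrm{gr}\,\phi$, which you invoke as the remedy, only gives $c_n\le dc_{n-1}$, which is the wrong direction.

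What the truncated quotients actually give is a cumulative inequality, and that is all you need. Put $b_n=\dim B/J^n$, with $b_n=0$ for $n\le 0$.
\begin{itemize}
\item Since $\phi(\mathcal F_n)=J^n$ and $\ker\phi=\psi(B^r)$, you have $J/J^n\cong B^d/(\mathcal F_n+\psi(B^r))$.
\item Since $\psi\bigl(\bigoplus_iJ^{n-k_i}\bigr)\subset\mathcal F_n$, the space $(\mathcal F_n+\psi(B^r))/\mathcal F_n$ is a quotient of $\bigoplus_iB/J^{n-k_i}$.
\end{itemize}
Hence $b_n-1\ge d\,b_{n-1}-\sum_i b_{n-k_i}$ for all $n\ge 1$. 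In generating-function form this reads $H_B(t)P_G(t)/(1-t)\ge 1/(1-t)$ coefficientwise. So the partial sums of the coefficients of $H_BP_G$ are $\ge 1$, which is weaker than your claim about the coefficients themselves. This is the form the truncation argument really proves, and it suffices. Evaluating at $0<t<1$ and multiplying by $1-t>0$ gives $H_B(t)P_G(t)\ge 1$, after which your final paragraph goes through unchanged.

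A minor point: the modules in your sequence are left $B$-modules, not bimodules.
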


We will use the following lemma in Section $6$, to deduce a proper arithmetic surface with infinite étale fundamental group from the non proper example of Section $5$.

\begin{lemma}\label{powersofinertia}
Let $G$ be a finitely presented pro-$2$ group with a presentation
whose Golod--Shafarevich polynomial satisfies $P_G(t)<0$ at some
$t$ with $0<t<1$. Let $\tau_1,\ldots,\tau_b\in G$. For $n$ large enough,
the quotient
\[
G_M:=G/\overline{\langle\!\langle
\tau_1^M,\ldots,\tau_b^M
\rangle\!\rangle},\qquad M=2^n,
\]
is infinite.
\end{lemma}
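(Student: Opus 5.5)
Take the given presentation $1\to R\to F\to G\to 1$, with $F$ free pro-$2$ of rank $d$ and $R$ normally generated by $R_1,\dots,R_r$. Fix $t_0\in(0,1)$ with $P_G(t_0)<0$. Lift each $\tau_j$ to some $u_j\in F$. Then $G_M$ has the presentation $F/\overline{\langle\!\langle R_1,\dots,R_r,u_1^M,\dots,u_b^M\rangle\!\rangle}$, obtained by adding $b$ relations. The plan is to apply Vinberg's theorem to this enlarged presentation, after checking that the $b$ new relations only change the Golod--Shafarevich polynomial by something that becomes negligible at $t_0$ as $n\to\infty$.

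\textbf{Depth of the new relations.} First suppose every $u_j\neq e$. Then $u_j-1\in I$, so $\omega(u_j)\geq 1$, and inequality (\ref{stabilitédegré}) gives $\omega(u_j^{M})\geq M=2^n$. If some $u_j^M=e$, its depth is $+\infty$ and that relation can be dropped. Hence the polynomial of the enlarged presentation satisfies
\[
P_{G_M}(t_0)\leq P_G(t_0)+\sum_{j=1}^b t_0^{\omega(u_j^M)}\leq P_G(t_0)+b\,t_0^{2^n}.
\]
Here I use that the $k$-th coefficient counts relations of depth exactly $k$, and that each extra relation contributes $t_0^{\omega}\le t_0^{2^n}$. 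Since $t_0^{2^n}\to0$, for $n$ large the right side is still $<0$.

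\textbf{Minimality and nontriviality.} Vinberg's theorem is stated for minimal presentations, and the presentation of $G_M$ is still on $d$ generators. Two routes are available.

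Route 1: show minimality is preserved. For $n\geq1$ each $u_j^M$ lies in the Frattini subgroup $\Phi(F)=F^2[F,F]$, so $d(G_M)=d(G)=d$ and the generator count is unchanged. The relations need not form a minimal set, but I do not need that: the Golod--Shafarevich/Vinberg inequality holds for any presentation with a minimal generating set, since extra relations only add positive terms. I would check this against Vinberg's proof, which works with any set of relations generating $R$ normally, the generators being fixed.

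Route 2: avoid the issue by noting that the hypothesis $P_G(t_0)<0$ is applied to an arbitrary presentation anyway, and Vinberg's bound $P(t)>0$ for finite groups holds for every presentation $F\to G$ with $F$ of rank $d(G)$.

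Nontriviality of $G_M$ is automatic: $d(G_M)=d$, and $d\geq1$ because $1-dt_0+\sum r_kt_0^k<0$ forces $d>0$.

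\textbf{Main obstacle.} The main obstacle is this bookkeeping: making sure Vinberg's criterion applies to a presentation that is not necessarily minimal in relations, and that depth is computed in the same free group $F$, which it is, since $F$ is unchanged. With that settled, Vinberg gives that $G_M$ is infinite for all $n$ with $b\,t_0^{2^n}<-P_G(t_0)$.
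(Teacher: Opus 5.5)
Your proposal is correct and follows the same route as the paper. Like the paper, you lift the $\tau_i$ to $F$, use $\omega(u^{2^n})\geq 2^n\omega(u)$ to give the new relations depth at least $M$, bound $P_{G_M}(t)\leq P_G(t)+b\,t^{M}<0$ for $n$ large, and conclude with Vinberg's criterion. Your extra bookkeeping fills a point the paper leaves implicit: the relations $u_j^M$ lie in $\Phi(F)$, so the generator count is unchanged, and the inequality still holds when the set of relations is not minimal.
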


\begin{proof}
Choose lifts $u_1,\ldots,u_b$ of the $\tau_i$ in the free pro-$2$
group of the presentation. By \eqref{stabilitédegré}, the added
relations $u_i^M$ have depth at least $M$. The polynomial of the
resulting presentation therefore satisfies
\[
P_{G_M}(t)\leq P_G(t)+bt^M<0
\]
for $n$ large enough. Vinberg's criterion implies that $G_M$ is
infinite.
\end{proof}

\subsection{Pro-2 groups}
Let $\Pi$ be a pro-$2$ group generated by $2g$ elements. Define its Frattini subgroup $\Phi(\Pi)$ as the closure of the subgroup generated by the squares and the commutators
$$\Phi(\Pi):=\overline{\Pi^{2}[\Pi,\Pi]}.$$
We need the following lemma on pro-$2$ groups in the proof of Proposition \ref{tourgeometrique}. It provides a way to make a quotient map between pro-$2$ groups invariant under an action while controlling the number of relations and generators of the quotient in terms of the action.

\begin{lemma}\label{lemmaequivariantquotient}
    Let $\Pi$ be a pro-$2$ group generated by $2g$ elements. Let $H$ be a profinite group acting continuously on $\Pi$, with image $\Gamma\subset \mathrm{Aut}(\Pi)$ generated by at most $n$ elements. Assume that the action of $H$ is trivial on $\Pi/\Phi(\Pi)$. Then, for any surjective morphism
    $$q:\Pi\twoheadrightarrow F_g$$ into a free pro-$2$ group of rank $g$, there exists a quotient 
    $$F_g\twoheadrightarrow G$$ such that the induced surjective morphism
    $$p:\Pi\twoheadrightarrow G$$ is $H$-invariant and $G$ satisfies $r(G)\leq 2g\cdot n$ and $d(G)=g$.
\end{lemma}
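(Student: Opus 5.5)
The idea is to force $H$-invariance by killing, in $F_g$, the elements that measure how $H$ moves the images of the generators. Fix generators $\gamma_1,\dots,\gamma_n$ of $\Gamma$, with chosen lifts $h_1,\dots,h_n\in H$, and topological generators $x_1,\dots,x_{2g}$ of $\Pi$. Define $G$ as the quotient of $F_g$ by the closed normal subgroup $N$ topologically normally generated by the $2gn$ elements
\[
\rho_{j,i}:=q\bigl(\gamma_j(x_i)\bigr)\,q(x_i)^{-1},\qquad 1\leq j\leq n,\ 1\leq i\leq 2g .
\]
Set $p:=\pi\circ q$, where $\pi:F_g\twoheadrightarrow G$ is the projection. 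By construction, $r(G)\leq 2gn$ with respect to the generators of $F_g$. It remains to check two things: that $p$ is $H$-invariant, and that $d(G)=g$. The bound on $r$ is then for the presentation $1\to N\to F_g\to G\to 1$, which is minimal once $d(G)=g$.

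For invariance, put $K:=\mathrm{Ker}(p)$. The action of $h\in H$ factors through its image in $\Gamma$, so it suffices to show $p\circ\gamma=p$ for every $\gamma\in\Gamma$. Consider first a generator $\gamma_j$. The maps $p\circ\gamma_j$ and $p$ are continuous homomorphisms $\Pi\to G$ that agree on each $x_i$, since $\rho_{j,i}\in N$. They therefore agree on the dense subgroup generated by the $x_i$, and hence everywhere. Next, the set $\{\gamma\in\mathrm{Aut}(\Pi): p\circ\gamma=p\}$ is closed under composition and inverses. Indeed, if $p\circ\gamma=p$ then $p=p\circ\gamma\circ\gamma^{-1}=p\circ\gamma^{-1}$. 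So it contains the abstract group generated by the $\gamma_j$.

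The subtle point is that $\Gamma$ is only topologically generated by the $\gamma_j$. This needs a topology on $\mathrm{Aut}(\Pi)$; the paper's usage suggests that "generated" means topologically generated in this sense. Here $\Pi$ is finitely generated pro-$2$, so $\mathrm{Aut}(\Pi)$ is profinite, with the topology given by the action on the characteristic open subgroups. Moreover, the condition $p\circ\gamma=p$ is closed in $\gamma$. To see this, write $G=\varprojlim G/U$ over open normal subgroups $U$. Each map $\Pi\to G/U$ factors through a characteristic open subgroup of $\Pi$, and agreement of $p\circ\gamma$ and $p$ modulo $U$ depends only on the action of $\gamma$ on that finite quotient, which is an open condition's complement, i.e.\ closed. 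Hence invariance extends to the closure, which is $\Gamma$. I expect this topological step to be the main thing to write carefully; the rest is formal.

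For $d(G)=g$, it suffices to show $N\subseteq\Phi(F_g)$, since then $G/\Phi(G)=F_g/\Phi(F_g)\cong(\Z/2\Z)^g$. This is where the hypothesis that $H$ acts trivially on $\Pi/\Phi(\Pi)$ enters. By that hypothesis, $\gamma_j(x_i)x_i^{-1}\in\Phi(\Pi)$. Surjective morphisms of pro-$2$ groups send Frattini subgroups into Frattini subgroups, since squares and commutators map to squares and commutators and closures map into closures. So $q(\Phi(\Pi))\subseteq\Phi(F_g)$, and thus $\rho_{j,i}\in\Phi(F_g)$. Because $\Phi(F_g)$ is closed and normal, $N\subseteq\Phi(F_g)$, which gives $d(G)=d(F_g)=g$ and completes the proof.
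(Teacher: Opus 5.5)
Your proposal is correct and follows essentially the same route as the paper: you impose the same $2gn$ relations $q(\gamma_j(x_i)x_i^{-1})$ on $F_g$, get $H$-invariance from these relations plus continuity, and get $d(G)=g$ because the relations lie in $\Phi(F_g)$. Your closure argument for a topologically generated $\Gamma$ (the invariance condition is in fact clopen, not merely closed, in $\gamma$) just spells out what the paper compresses into ``by the continuity of the action.''
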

\begin{proof}
    Let $x_1,...,x_{2g}$ be generators of $\Pi$, and let $\gamma_1,...,\gamma_n$ be generators of $\Gamma$. Define the pro-$2$ group $G$ as the quotient of $F_g$ by the relations 
    $$\{q(\gamma_j(x_i)x_i^{-1})\ ,\ 1\leq j\leq n,\ 1\leq i\leq 2g\}.$$
Denote by $$p:\Pi\rightarrow G$$ the induced surjective morphism. It is $H$-invariant by the relations imposed in $G$ and by the continuity of the action.

As the action of $\Gamma$ on $\Pi/\Phi(\Pi)$ is trivial, the elements $\gamma_j(x_i)x_i^{-1}$ are in the Frattini subgroup $\Phi(\Pi)$, hence their image $q(\gamma_j(x_i)x_i^{-1})$ is in the Frattini subgroup $\Phi(F_g)$. This implies that we have the inclusion
$$\mathrm{Ker}(F_g\rightarrow G)\subset \Phi(F_g).$$
Therefore, we have an isomorphism
$$F_g/\Phi(F_g)\simeq G/\Phi(G)$$
    and an equality $d(F_g)=d(G)=g$.
Finally, we imposed $2g\cdot n$ new relations in $G$; hence, the inequality $r(G)\leq 2g\cdot n$.
\end{proof}

\section{A geometric Golod--Shafarevich tower}
\subsection{Small Galois action and infinite pro-2 quotient}
Let $C$ be a smooth, projective, geometrically connected curve of genus $g\geq 2$ over $\Q$, endowed with a rational point $P$.

We want to provide a criterion to ensure that the étale fundamental group $\pi^{\text{ét}}_1(C,*)$ of $C$ has an infinite pro-$2$ quotient $$\pi^{\text{ét}}_1(C,*)\twoheadrightarrow G .$$
The idea is to use a specialization theorem 
$$\pi^{\text{ét}}_1(C_{\overline{\Q}},*)\twoheadrightarrow \pi^{\text{ét}}_1(\mathcal{C}_{\overline{\F}_2},*)$$
and to apply it to a curve admitting a proper semistable model $\mathcal{C}$ whose geometric special fiber at $2$ has only rational components. This would give a surjective map
$$\pi^{\text{ét}}_1(C_{\overline{\Q}},*)\twoheadrightarrow F_{g}$$ into a free pro-$2$ group of rank $g$. The rational point $P$ splits the exact sequence  \cite[Exp. IX, Th. 6.1 and Cor. 6.4]{SGA1-rev}
$$1\rightarrow \pi^{\text{ét}}_1(C_{\overline{\Q}},*)\rightarrow \pi^{\text{ét}}_1(C,*)\rightarrow G_{\Q}\rightarrow 1$$
where $G_{\Q}$ denotes the Galois group of $\Q$. Therefore, we have the decomposition 
$$\pi^{\text{ét}}_1(C,*)\simeq \pi^{\text{ét}}_1(C_{\overline{\Q}},*)\rtimes G_{\Q}$$ and we seek a quotient $F_g/H$ such that the composite surjective map $$\pi^{\text{ét}}_1(C_{\overline{\Q}},*)\twoheadrightarrow F_{g}\twoheadrightarrow F_g/H$$ extends to a surjective map
$$\pi^{\text{ét}}_1(C,*)\twoheadrightarrow F_{g}/H$$
by killing the action of the Galois group. To apply the Golod--Shafarevich theorem, we need to ensure that the following inequality holds 
$$r(F_{g}/H)<d(F_{g}/H)^{2}/4.$$

This is possible under the assumption that the action of the Galois group on 
$$H^{1}_{\text{ét}}(C_{\overline{\Q}}, \F_2)$$ is trivial and under some assumptions on the primes of bad reduction of $C$.
\subsection{The geometric construction}
\begin{proposition}\label{tourgeometrique}
    Let $C$ be a smooth, projective, geometrically connected curve of genus $g\geq 2$ over $\Q$, endowed with a rational point $P$. Assume the following conditions:
    \begin{enumerate}
        \item $C_{\Q_2}$ has a proper semistable model $\mathcal{C}_{\Z_2}$ over $\Z_2$ for which every geometric component of the special fiber is rational,
        \item the action of $G_\Q$ on $H^{1}_{\text{ét}}(C_{\overline{\Q}}, \F_2)$ is trivial,
        \item there exists a finite set of primes $S$ that contains $2$ and every prime of bad reduction of $C$, which satisfies $8(\text{Card} (S)+1)<g$.
    \end{enumerate}
    Then there exists a quotient $$\pi^{\text{ét}}_1(C,*)\twoheadrightarrow G$$ where $G$ is an infinite pro-$2$ group. Moreover, its restriction to the geometric fundamental group is surjective and factors through specialization at $2$, and the quotient map kills the image of the section associated to $P$.
\end{proposition}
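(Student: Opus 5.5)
The plan is to work with the maximal pro-$2$ quotient $\Pi$ of $\pi^{\text{ét}}_1(C_{\overline{\Q}},*)$, which is generated by $2g$ elements (it is the pro-$2$ surface group). I will first produce a surjection $q:\Pi\twoheadrightarrow F_g$ by specialization. Then I will apply Lemma \ref{lemmaequivariantquotient} to the conjugation action of $G_\Q$ on $\Pi$ given by the section $P$. This yields an equivariant quotient $G$ with $d(G)=g$ and few relations, and I will conclude that $G$ is infinite via Golod--Shafarevich/Vinberg.

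\emph{Step 1: specialization.} By the specialization theorem (SGA1, Exp.\ X) applied to the proper semistable model $\mathcal{C}_{\Z_2}$, there is a surjection $\pi^{\text{ét}}_1(C_{\overline{\Q}},*)\twoheadrightarrow \pi^{\text{ét}}_1(\mathcal{C}_{\overline{\F}_2},*)^{(2')}$. Here $(2')$ denotes the prime-to-$2$ part; the full group works for tame/pro-$\ell$ quotients, but for $\ell=2=p$ I will use the fact that the special fiber is a tree of $\mathbb{P}^1$'s glued along a graph. Since every geometric component is rational, the arithmetic genus $g$ equals the first Betti number of the dual graph $\Delta$. Étale covers of $\mathbb{P}^1$'s glued according to $\Delta$ that are trivial on each component are exactly the covers coming from $\Delta$. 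Hence $\pi^{\text{ét}}_1(\mathcal{C}_{\overline{\F}_2},*)$ surjects onto the profinite completion of $\pi_1(\Delta)$, a free group of rank $g$. Passing to pro-$2$ completions gives $q:\Pi\twoheadrightarrow F_g$ factoring through specialization. I must check that these graph covers are genuinely realized in characteristic $2$ and lift; this holds because they are étale covers of the special fiber, and the specialization map is surjective onto the full $\pi_1$ of the special fiber in the proper case.

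\emph{Step 2: bounding the Galois image.} Let $\Gamma$ be the image of $G_\Q$ in $\mathrm{Aut}(\Pi)$. Outside $S$, $C$ has good reduction and $2\notin$ those primes. So, by the good-reduction (Néron--Ogg--Shafarevich type) criterion for the pro-$2$ fundamental group, the action is unramified there, and it factors through $G_{\Q,S}$. By hypothesis (2), $\Gamma$ acts trivially on $\Pi/\Phi(\Pi)$, which is dual to $H^1_{\text{ét}}(C_{\overline{\Q}},\F_2)$. Therefore $\Gamma$ lies in the kernel of $\mathrm{Aut}(\Pi)\to\mathrm{Aut}(\Pi/\Phi(\Pi))$, a pro-$2$ group. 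It follows that $\Gamma$ is a quotient of the maximal pro-$2$ quotient $G_S(2)$ of $G_{\Q,S}$. By Kummer theory,
\[
d(G_S(2))=\dim_{\F_2}H^1(G_{\Q,S},\F_2)\le \mathrm{Card}(S)+1,
\]
with generators coming from $-1$ and the primes of $S$. So $\Gamma$ is generated by $n\le \mathrm{Card}(S)+1$ elements. I expect this to be the main obstacle: one must justify that the outer action is unramified outside $S$, including $\ell=2$ at primes $p\ne 2$ of good reduction, and that it is then an honest action through the section.

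\emph{Step 3: the quotient and its infiniteness.} Lemma \ref{lemmaequivariantquotient} gives an $H$-invariant surjection $p:\Pi\twoheadrightarrow G$ through $F_g$, with $d(G)=g$ and at most $2gn$ relations. By the proof of that lemma, all relations lie in $\Phi(F_g)$, hence have depth $\ge 2$. Vinberg's polynomial at $t=2/g$ then satisfies
\[
P_G(2/g)\le 1-2+2gn\cdot\frac{4}{g^2}=-1+\frac{8n}{g}<0,
\]
by hypothesis (3). So $G$ is infinite.

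Finally, I define $\pi^{\text{ét}}_1(C,*)\simeq \pi^{\text{ét}}_1(C_{\overline{\Q}},*)\rtimes G_\Q\to G$ by $(x,\sigma)\mapsto p(\bar x)$, where $\bar x$ is the image of $x$ in $\Pi$. This is a homomorphism precisely because $p$ is $G_\Q$-invariant. It is surjective on the geometric part, factors through specialization, and kills the image of the section.
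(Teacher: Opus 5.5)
Your proposal follows the paper's proof essentially step for step: specialization at $2$ giving $\Pi\twoheadrightarrow F_g$; the pointed Galois image $\Gamma$ being pro-$2$ (it acts trivially on $\Pi/\Phi(\Pi)$) and a quotient of $G^{(2)}_{\Q,S}$ with at most $\mathrm{Card}(S)+1$ generators; Lemma~\ref{lemmaequivariantquotient}; and the semidirect-product map, with your evaluation $P_G(2/g)<0$ for depth-$\geq 2$ relations being just the refined $r<d^2/4$ form of Golod--Shafarevich that the paper invokes (and recomputes in Section~6). The step you flag is closed exactly as in the paper: for $p\notin S$ the point $P$ extends by properness to a section of the smooth proper model over $\Z_{(p)}$, so smooth proper specialization for the pro-$2$ fundamental group ($p\neq 2$) makes the pointed action itself, not merely the outer action, unramified at $p$; likewise in Step~1 you only need surjectivity of the full specialization map for the proper semistable model, so the prime-to-$2$ detour can be dropped.
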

\begin{proof}
    We denote by $$\Pi:=\pi^{\text{ét}}_1(C_{\overline{\Q}},*)^{(2)}$$ the pro-$2$ quotient of the geometric étale fundamental group. Grothendieck's specialization theorem gives a surjective map 
    $$\pi^{\text{ét}}_1(C_{\overline{\Q}},*)^{(2)}\twoheadrightarrow \pi^{\text{ét}}_1(\mathcal{C}_{\overline{\F}_2},*)^{(2)}$$
    between the pro-$2$ quotients.
    
    Let $\Delta$ be the dual graph of the geometric special fiber of $\mathcal{C}_{\Z_2}$, and let $\widetilde{C}_v$ denote the normalization of the component corresponding to a vertex $v$. The genus formula for a semistable curve gives
    $$g=\sum_{v}g(\widetilde{C}_v)+b_1(\Delta).$$
    By hypothesis, every $\widetilde{C}_v$ is rational, so that $b_1(\Delta)=g$. Since their normalizations have trivial geometric étale fundamental group, the pro-$2$ fundamental group of the special fiber is the free pro-$2$ group on the cycles of $\Delta$. Thus we have an isomorphism
    $$\pi^{\text{ét}}_1(\mathcal{C}_{\overline{\F}_2},*)^{(2)}\simeq F_g,$$
    where $F_g$ is a free pro-$2$ group of rank $g$, and hence a surjective morphism
    $$\Pi\twoheadrightarrow F_g.$$
Let $$\rho:G_{\Q}\rightarrow \mathrm{Aut}(\Pi)$$ be the pointed Galois action induced by the splitting associated with $P$. We want to study the image 
    $$\Gamma:=\rho(G_{\Q})$$ of the action and show that it is a pro-$2$ group with a controlled number of generators. In fact, we have a surjective morphism 
    \begin{equation}\label{actiongalois}
    G^{(2)}_{\Q,S}\twoheadrightarrow \Gamma
    \end{equation}
    where $G^{(2)}_{\Q,S}$ denotes the Galois group of the maximal pro-$2$ extension of $\Q$ unramified outside $S$. To see that $\Gamma$ is a pro-$2$ group, note that the $G_{\Q}$-equivariant isomorphism $$(\Pi/\Phi(\Pi))^\vee\simeq H^{1}_{\text{ét}}(C_{\overline{\Q}},\F_2)$$ and hypothesis (2) imply that the action of $G_{\Q}$ on $\Pi/\Phi(\Pi)$ is trivial. Hence the following inclusion holds
    $$\Gamma\subset{\text{Ker}}(\mathrm{Aut}(\Pi)\rightarrow\mathrm{Aut}(\Pi/\Phi(\Pi))).$$
    This implies that $\Gamma$ is a pro-$2$ group as a closed subgroup of a pro-$2$ group. At every prime outside $S$, the point $P$ extends, by properness, to a section of a smooth proper model of $C$. Smooth proper specialization for the pro-$2$ fundamental group therefore shows that this pointed action is unramified outside $S$. This gives the surjective map \ref{actiongalois}.
    
    The pro-$2$ group $$G^{(2)}_{\Q,S}$$ has $\text{Card}(S)+1$ generators. Indeed, since $2\in S$, Kummer theory identifies the vector space $H^1(G_{\mathbb Q,S}^{(2)},\mathbb F_2)$
with the subspace of $\Q^\times/(\Q^\times)^2$
having basis the classes of $-1$ and the primes in $S$.
Thus, by the Burnside basis theorem,
\[
d(G_{\mathbb Q,S}^{(2)})
=
\dim_{\mathbb F_2}H^1(G_{\mathbb Q,S}^{(2)},\mathbb F_2)
=
\operatorname{Card}(S)+1.
\]

    Hence, the pro-$2$ group $\Gamma$ with at most $\text{Card}(S)+1$ generators acts by automorphisms on the pro-$2$ group $\Pi$ which has $2g$ generators. By hypothesis, the action is trivial on $\Pi/\Phi(\Pi)$ and we may apply Lemma \ref{lemmaequivariantquotient} to the quotient $$\Pi\twoheadrightarrow F_{g}.$$

    Therefore, we have a $G_{\Q}$-invariant surjective map
    $$p:\Pi\twoheadrightarrow G$$ where $G$ is a pro-$2$ group with $g$ generators and at most $2g(\text{Card}(S)+1)$ relations. The assumption on $S$ gives
    $$r\leq 2g(\text{Card}(S)+1)<2g\cdot g/8=g^{2}/4=d(G)^{2}/4.$$ The Golod--Shafarevich criterion therefore implies that $G$ is infinite. The $G_{\Q}$-invariance of $p$ gives a surjective morphism
    $$\Pi\rtimes G_{\Q}\twoheadrightarrow G,\qquad (u,\sigma)\longmapsto p(u).$$
    Composing with the natural quotient from the semidirect product decomposition above gives a surjective morphism
    $$\pi^{\text{ét}}_1(C,*)\twoheadrightarrow G$$
    that kills the image of the section associated with $P$. Since $G$ is pro-$2$, this morphism factors through a surjective morphism
    $$\pi^{\text{ét}}_1(C,*)^{(2)}\twoheadrightarrow G.$$
\end{proof}
\section{An explicit curve}

Let $C$ be the smooth, projective, geometrically connected curve over $\Q$ defined by the equation 
\begin{equation}\label{courbeexplicite}
y^{2}=\underset{i=1}{\overset{m}{\prod}}(x-i)(x-i-8m)\end{equation}
where $m=2^{256}$. This is a hyperelliptic curve of genus $g=m-1$. Let $P=\infty^{+}$.

\subsection{Rationality of the 2-torsion of the Jacobian}
We have a canonical $G_\Q$-equivariant isomorphism
$$J_C[2](\overline{\Q})\simeq H^{1}_{\text{ét}}(C_{\overline{\Q}},\F_2)$$
where $J_C[2](\overline{\Q})$ denotes the geometric $2$-torsion points of the Jacobian $J_C$ of the curve $C$ defined by \ref{courbeexplicite}.

All the Weierstrass points of $C$ are defined over $\Q$, and their differences generate the $2$-torsion of the Jacobian. Hence, all the geometric points of $J_C[2]$ are defined over $\Q$ and the Galois action of $G_\Q$ on $J_C[2](\overline{\Q})$ is trivial so that the second assumption of Proposition \ref{tourgeometrique} is satisfied.

\subsection{Primes of good reduction}
If $i$ and $j$ denote two roots of the polynomial, then 
$$\lvert i-j\rvert\leq 9m$$ which implies that the primes of bad reduction of $C$ are all contained in the set 
$$S:=\{p\in \mathcal{P}, p\leq 9m\}.$$
In particular, using the estimates of Rosser and Schoenfeld \cite{RosserSchoenfeld1962}, $$8(\text{Card}(S)+1)\leq \frac{144m}{\log 9m}+8< \frac{144}{179}m+8<m-1=g$$ and the third assumption of Proposition \ref{tourgeometrique} is satisfied.

\subsection{A semistable model $\mathcal{C}_{\Z_2}$ of $C$ over $\Z_2$}

It remains to verify the first assumption of Proposition \ref{tourgeometrique}: the existence of a proper semistable model $\mathcal{C}_{\Z_2}$ of $C_{\Q_2}$ over $\Z_2$ with rational geometric components in its special fiber. We construct such a model whose dual graph consists of two copies of a tree $T$ joined by $m$ edges. 

\begin{proposition}\label{modelsemistable}
    The curve $C$ defined by \ref{courbeexplicite} admits a proper semistable model $\mathcal{C}_{\Z_2}$ over $\Z_2$. Every irreducible component of its special fiber is isomorphic to $\mathbb{P}^1_{\F_2}$, and the first Betti number of the dual graph of its special fiber is $g=m-1$.
\end{proposition}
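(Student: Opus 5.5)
The plan is to build $\mathcal{C}_{\Z_2}$ by hand from the $2$-adic cluster structure of the roots, working in residue characteristic $2$ with the substitution $y=Q+2z$. The cluster theory of Dokchitser--Dokchitser--Maistret--Morgan is stated for odd residue characteristic, so I will not invoke it directly.

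\textbf{Cluster structure.} Write the roots as $a_i=i$ and $b_i=i+8m$ for $1\leq i\leq m$, and put $c_i:=i+4m$. Since $\{1,\dots,m\}$ is a complete system of residues modulo $m=2^{256}$, the $2$-adic clusters of the $a_i$ are exactly the residue classes modulo $2^k$ for $0\leq k\leq 256$. They form a complete binary tree $T$ of depth $256$ with $m$ leaves. Each $b_i$ lies in the leaf class of $a_i$, at distance $|8m|_2=2^{-259}$. Thus every leaf disc contains a twin pair $\{a_i,b_i\}$ of relative depth $3$.

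\textbf{Key identity.} The factorisation
\[
f(x)=\prod_{i=1}^m\bigl((x-c_i)^2-16m^2\bigr)
\]
shows that $f=Q^2+16m^2R$, where $Q:=\prod_i(x-c_i)$ and $R\in\Z[x]$. Substituting $y=Q+2z$ and dividing by $4$ gives
\[
z^2+zQ=4m^2R.
\]
The right side is divisible by a large power of $2$. The reduction is $z(z+\overline{Q})=0$ with $\overline{Q}=(x(x+1))^{m/2}$. It consists of two rational components, $z=0$ and $z=\overline{Q}$, meeting with high multiplicity over $x\equiv 0,1$. This is not yet semistable.

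\textbf{Charts along $T$.} Over each cluster disc $D=\{x=c+2^kt\}$ with $k\leq 256$, I will rescale. Set $Q(c+2^kt)=2^{e_D}Q_D(t)$ with $Q_D$ primitive, and $z=2^{e_D}z_D$, where $e_D$ is the valuation of $Q$ at the centre. This gives
\[
z_D^2+z_DQ_D(t)=2^{N_D}R_D(t).
\]
The task is to verify $N_D>0$ in every chart. The reduction is then again two copies of $\mathbb{P}^1_{\F_2}$, namely $z_D=0$ and $z_D=\overline{Q}_D$. Here $\overline{Q}_D(t)$ is, up to a unit, $(t(t+1))^{m/2^{k+1}}$ for $k<256$, and a unit times $t$ at the leaves.
\begin{itemize}
\item For $k<256$, the two components of a disc meet only at $t=0,1$, which are the two child discs. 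Hence in the glued model the $z=0$ components form one copy of $T$, the $z=\overline{Q}$ components form a second copy, and parent--child intersections are resolved by the annuli between successive levels.
\item At a leaf $k=256$, with $\overline{Q}_D$ a unit times $t$, the two copies meet at $t=0$. Locally the equation is $z_D(z_D+\text{unit}\cdot t)=2^{N}\cdot\text{unit}$, i.e.\ $uv=2^N$, an ordinary double point. It can be resolved into a chain of $\mathbb{P}^1$'s if regularity is wanted, which only subdivides an edge.
\end{itemize}

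\textbf{Gluing and counting.} I glue the charts over the standard admissible covering of the $x$-line by discs and annuli. Over annuli between levels $k$ and $k+1$, the equation should again take the shape $uv=2^{e}$, which provides the nodes joining parent and child components within each copy of $T$. Points at infinity are handled by the chart at $k=0$, since $\deg f$ is even and $\infty^{\pm}$ are defined over $\Q$. The dual graph is then two copies of $T$ joined by $m$ edges, one at each leaf, possibly subdivided. Since $T$ is a tree, contracting both copies to points leaves two vertices joined by $m$ edges, so
\[
b_1=m-1=g,
\]
and every component is $\mathbb{P}^1_{\F_2}$.

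\textbf{Main obstacle.} The hard part is the bookkeeping of $2$-adic valuations. One must show in every chart and every annulus that $N_D>0$, that $R_D$ does not destroy the shape of the reduction, and that the singularities are exactly $uv=2^e$ with $e\geq 1$. I would attack this by computing $v_2(Q(c+2^kt))$ and $v_2(f(c+2^kt)-Q^2)$ explicitly via Legendre-type counts over the residues modulo $2^k$. The slack supplied by the relative depth $3$ of the twins (the factor $16m^2$) is what should make $N_D\geq 1$ everywhere, and this is precisely the reason the roots were offset by $8m$.
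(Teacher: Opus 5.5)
Your construction is the paper's in different coordinates. The shift $c_i=i+4m$ and the substitution $y=Q+2z$ are the global form of the paper's leaf change of variables $V_i=(t_i-4+w_i)/2$ (note $x-c_i=m(t_i-4)$). Your Hensel splitting of the two sheets away from the leaves is the paper's $y=\pm\prod_i(x-i)h_i(x)$.

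The gap is that what you call the ``main obstacle'' is the actual content of the proof, and you leave it undone. You do not show $N_D>0$. Without it the rescaled charts are not integral, and the reduction need not be $z_D(z_D+\overline{Q}_D)=0$. You also do not show that the cover splits over the annuli, and at the leaves the local form $uv=2^N\cdot(\text{unit})$ is only asserted. In addition, $e_D$ must be the Gauss valuation of $Q$ on the disc, as ``$Q_D$ primitive'' says. It cannot be the valuation at the centre, which is infinite if the centre is some $c_i$.

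No Legendre-type counting is needed: one multiplicative bound does everything. If $|x-i|_2\geq m^{-1}$ for all $i$, then $|x-c_i|_2=|x-i|_2$, so $|16m^2/(x-c_i)^2|_2\leq 2^{-4}$ and
\[
\frac{f}{Q^2}=\prod_{i=1}^m\Bigl(1-\frac{16m^2}{(x-c_i)^2}\Bigr)=1+16\varepsilon ,
\]
with $\varepsilon$ power-bounded on the complement of the leaf discs, including the residue disc at $\infty$.
\begin{itemize}
\item Writing $z=Q\zeta$ turns your equation into $\zeta^2+\zeta=4\varepsilon$, which splits by Hensel's lemma. This gives the two copies of $T$, the annuli, and the two points at infinity at once. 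Evaluating at a generic point of each non-leaf disc then gives $N_D\geq 2$.
\item On a leaf disc $x=i+mt$, one has $|i-j|_2\geq 2/m$ for $j\neq i$. So the factors with $j\neq i$ contribute $A^2(1+64\delta)$, where $A=\prod_{j\neq i}(x-c_j)$ and $\delta$ is power-bounded. Hence $f-Q^2=-16m^2A^2\cdot(\text{unit})$, and the substitution $z=mA\zeta$ gives $\zeta(\zeta+t-4)=-4\cdot(\text{unit})$. Thus $N=2$, and the right-hand side really is $4$ times a unit. After absorbing the unit you recover exactly the paper's $V_iW_i=4$.
\end{itemize}

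Finally, your glued formal model still has to come from a proper $\Z_2$-scheme. The cleanest route is the paper's: take the normalization of the blow-up $\mathcal B$ of $\mathbb{P}^1_{\Z_2}$ in $\Q_2(C)$, and use your charts only to compute its completion.
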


\begin{proof}
Write $m=2^k$, with $k=256$.
Normalize the absolute value by $|2|_2=1/2$, and for $1\leq i\leq m$, let
\[
D_i:=\{|x-i|_2< m^{-1}\},
\qquad
\overline{D}_i:=\{|x-i|_2\le m^{-1}\},
\qquad
U:=\mathbb{P}_{\Q_2}^{1,\mathrm{an}}
  \setminus\bigcup_{i=1}^mD_i.
\]
The disks $\overline{D}_i$ are pairwise disjoint since
$|i-j|_2\ge 2/m$ for $i\ne j$.

We blow up $$\mathbb{P}^1_{\Z_2}$$ $k$
successive times as follows: at each time, blow up the distinct closed points
where the strict transforms of the sections $x=i$ meet the
special fiber. The resulting regular proper model $\mathcal B$
has special fiber a tree $T$ of projective lines over $\mathbb{F}_2$.
Its terminal components have coordinates $$t_i=(x-i)/m.$$
Removing the special points $t_i=0$ from its formal completion
gives a formal model $\mathfrak U$ of $U$.

Let $\mathcal{C}_{\Z_2}$ be the normalization of $\mathcal B$
in $\Q_2(C)$. It is a proper flat model
with generic fiber $C_{\Q_2}$. Since $\mathcal B$ is excellent, normalization
commutes with the formal completion considered here. We now describe
this completion.

On $U$, the inequalities $|8m/(x-i)|_2\le 1/8$ allow us
to define the convergent binomial series
\[
h_i(x)=\sqrt{1-\frac{8m}{x-i}}.
\]
Thus, the hyperelliptic cover splits over $U$ as
\[
y=\pm\prod_{i=1}^m(x-i)h_i(x).
\]
By the uniqueness of normalization, the model above $\mathfrak U$
is therefore the disjoint union of two copies of $\mathfrak U$.

On $\overline{D}_i$, the same square roots are defined for $j\ne i$.
Let
\[
w_i:=\frac{y}{m\prod_{j\ne i}(x-j)h_j(x)}.
\]
In the coordinate $w_i$, the equation is given by $w_i^2=t_i(t_i-8)$. The change of coordinates
\[
V_i=\frac{t_i-4+w_i}{2},
\qquad
W_i=\frac{t_i-4-w_i}{2}
\]
yields
\[
V_iW_i=4,\qquad t_i=V_i+W_i+4.
\]
Consequently, the normalization above the formal chart
$\operatorname{Spf}\Z_2\langle t_i\rangle$ is
\[
\operatorname{Spf}
\bigl(\Z_2\langle V_i,W_i\rangle/(V_iW_i-4)\bigr).
\]
Indeed, this ring is normal by Serre's criterion, is finite
over $\Z_2\langle t_i\rangle$ because
\[
V_i^2-(t_i-4)V_i+4=0,
\]
and has the required generic fiber.

Modulo $2$, this chart becomes $$V_iW_i=0,$$ with
$t_i=V_i+W_i$. On the boundary, where $t_i$ is invertible,
Hensel's lemma splits the cover into two branches.
They are exchanged by the hyperelliptic involution
$(V_i,W_i)\mapsto(W_i,V_i)$, so they attach to the two distinct
copies of $\mathfrak U$.
Each chart thus completes the two corresponding punctured
components and joins them at a split node.

Hence $\mathcal{C}_{\Z_2}$ is semistable, all components of its
special fiber are projective lines over $\mathbb{F}_2$, and
its dual graph consists of two copies of $T$ joined by $m$
edges. If $T$ has $v$ vertices, the dual graph of the special fiber has $2v$ vertices and $2(v-1)+m$ edges. Since it is connected, its first Betti number is \(m-1=g\).
\end{proof}

Therefore, we can apply Proposition \ref{tourgeometrique} to the curve $C$ defined by \ref{courbeexplicite}, which gives us an infinite pro-$2$ quotient
$$\pi^{\text{ét}}_1(C,*)\twoheadrightarrow G.$$

\section{Extending the covers over $\mathrm{Spec}\ \Z$}
We retain the notation of Proposition \ref{tourgeometrique}: let $G$ be the infinite pro-$2$ quotient of $\pi^{\text{ét}}_1(C,*)$. Each finite index subgroup of $G$ corresponds to a connected finite étale cover $C'\rightarrow C$ of $C$, and we want to extend each $C'\rightarrow C$ over $\Q$ to a connected finite étale cover $\mathcal{C}'\rightarrow \mathcal{C}^{\circ}$ over $\mathrm{Spec}\ \Z$. Here, $\mathcal{C}^{\circ}$ is defined as an open subscheme 
$$\mathcal{C}^{\circ}\hookrightarrow{\mathcal{C}}$$ of the proper, regular minimal model $\mathcal{C}$ of $C$ defined as 
$$\mathcal{C}^{\circ}:=\mathcal{C}\setminus \underset{p\in S\setminus\{2\}}{\bigcup}\underset{\substack{
E\in \operatorname{Irr}(\mathcal{C}_p)\\
s(p)\notin E
}}{\bigcup}E$$
which is flat over $\mathrm{Spec}\ \Z$. $s$ denotes the extension of the rational point $P\in C(\Q)$ to a section of the proper arithmetic surface $\mathcal{C}\rightarrow \mathrm{Spec}\ \Z$.

\begin{proposition}\label{étendrerevetements}
    Let $N$ be any finite index normal subgroup of $G$, and let $C'\rightarrow C$ be a connected finite étale cover of $C$ corresponding to $N$. Then this cover extends to a connected finite étale morphism $\mathcal{C}'\rightarrow \mathcal{C}^{\circ}$ where $\mathcal{C}'$ denotes the normalization of $\mathcal{C}^{\circ}$ in $k(C')$. Moreover, these extensions of covers over $\mathrm{Spec}\ \Z$ are compatible in the infinite pro-$2$ tower. 
\end{proposition}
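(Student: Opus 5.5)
We will prove that the normalization $\mathcal{C}'\to\mathcal{C}^{\circ}$ is étale by Zariski--Nagata purity. Since $\mathcal{C}$ is regular, so is the open $\mathcal{C}^{\circ}$. Purity of the branch locus then says a finite cover of $\mathcal{C}^{\circ}$ by a normal scheme, étale over the generic fiber, is étale as soon as it is unramified at the generic points of the codimension-one fibral components. Thus it suffices to show that $C'\to C$ is unramified along each vertical prime divisor $E\subset\mathcal{C}^{\circ}$. Equivalently, the inertia group $I_E\subset\pi^{\text{ét}}_1(C,*)$ of the discrete valuation of $k(C)$ attached to $E$ must map trivially to $G/N$. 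It is enough to prove that $I_E$ maps trivially to $G$ itself.

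We split the vertical divisors into three cases.

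\emph{Primes $p\notin S$.} The curve has good reduction at $p$, and the restriction of $\pi^{\text{ét}}_1(C,*)\to G$ to $\Pi$ factors through a pro-$2$ quotient. Smooth proper specialization (as in the proof of Proposition \ref{tourgeometrique}) shows that the cover is unramified along the fiber $\mathcal{C}_p$. Concretely, the inertia of the generic point of $\mathcal{C}_p$ lies in the kernel of $\pi^{\text{ét}}_1(C_{\Q_p^{ur}})\to\pi^{\text{ét}}_1(\mathcal{C}_{\Z_p^{ur}})$. The Galois part, which is ramification in the base, is killed because $G$ kills the image of the section.

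\emph{The prime $2$.} Here Proposition \ref{tourgeometrique} says the geometric part factors through specialization $\Pi\twoheadrightarrow\pi^{\text{ét}}_1(\mathcal{C}_{\overline{\F}_2})^{(2)}$. Hence the cover of $C_{\overline{\Q}_2}$ extends to a finite étale cover of the semistable model $\mathcal{C}_{\Z_2}$ over the strict henselization. This gives unramifiedness at every component of the fiber of that model, and hence at the components of the minimal model's fiber, which are among them up to contraction of $(-1)$ or $(-2)$ curves. We must still handle the arithmetic inertia. Write an element of $\pi^{\text{ét}}_1(C_{\Q_2})$ as $u\cdot s(\sigma)$. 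Its image in $G$ is $p(u)$, and $G$ is $G_\Q$-invariant. Inertia at a component therefore maps into the image of the inertia of $\pi^{\text{ét}}_1(\mathcal{C}_{\Z_2^{ur}}\text{-model})$ composed with the section, so it dies.

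\emph{Primes $p\in S\setminus\{2\}$.} On $\mathcal{C}^{\circ}$ only the component $E_0$ containing $s(p)$ survives. This component has multiplicity one because $s$ is a section, so the fiber is reduced at $s(p)$. We use the section: the inertia group of $E_0$ contains, up to conjugation, the image under $s$ of the inertia group $I_p\subset G_{\Q_p}$. Indeed, $s$ meets $E_0$ transversally at a regular point, so the henselization at $s(p)$ realizes $I_p$ as a full inertia group of $E_0$ at the generic point; this is Abhyankar-type reasoning in a regular local ring of dimension $2$ with $E_0$ a coordinate divisor. Since $G$ kills $s(G_\Q)$, the inertia of $E_0$ maps trivially. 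This step is the main obstacle. One must justify that, because the extension of $k(C)$ is tame along $E_0$ or pro-$2$ (and $p$ is odd), inertia at $E_0$ is procyclic tame. The inertia is then generated by the tame inertia along the arc $s$, which maps onto it since $E_0$ is reduced at $s(p)$; I would prove this via Abhyankar's lemma on $\operatorname{Spec}\mathcal{O}_{\mathcal{C},s(p)}^{sh}$.

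Finally, we check the remaining claims. Connectedness of $\mathcal{C}'$ follows because $\mathcal{C}'$ is normal with connected generic fiber $C'$ and is flat over the base. Compatibility in the tower follows from transitivity of normalization: for $N_2\subset N_1$, the normalization of $\mathcal{C}^{\circ}$ in $k(C'_2)$ equals the normalization of $\mathcal{C}'_1$ in $k(C'_2)$, yielding compatible finite étale maps $\mathcal{C}'_2\to\mathcal{C}'_1\to\mathcal{C}^{\circ}$.
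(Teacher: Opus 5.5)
Your proposal is correct and follows essentially the paper's route: purity of the branch locus, the semistable model $\mathcal{C}_{\Z_2}$ together with the section at $2$, and Abhyankar's lemma at $\mathcal{O}^{sh}_{\mathcal{C},s(p)}$ combined with the fact that $G$ kills $s(G_\Q)$ at the odd primes. The paper applies this last argument to all odd $p$, including $p\notin S$, rather than using smooth specialization there. One step needs correcting: $\mathcal{C}_{\Z_2}$ is not regular at its $m$ nodes $V_iW_i=4$, so the minimal model's fiber at $2$ contains exceptional $(-2)$-curves that are not components of $\mathcal{C}_{\Z_2}$, and your claim that the minimal model's components are among those of $\mathcal{C}_{\Z_2}$ is false; instead, pull the étale cover back to a regular model dominating both, as the paper does.
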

Before proving the proposition, we deduce the following corollary.
\begin{corollary}
    The étale fundamental group of the regular, integral arithmetic surface $\mathcal{C}^{\circ}$ admits an infinite pro-$2$ quotient 
    $$\pi^{\text{ét}}_1(\mathcal{C}^{\circ},*)\twoheadrightarrow G.$$
\end{corollary}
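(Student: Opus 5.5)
The corollary should follow formally from Proposition \ref{étendrerevetements} and the Galois correspondence for finite étale covers. Since $\mathcal{C}^{\circ}$ is connected, finite étale covers of it form a Galois category with fundamental group $\pi^{\text{ét}}_1(\mathcal{C}^{\circ},*)$. We choose the geometric point $*$ on the generic fiber $C\subset\mathcal{C}^{\circ}$. The open immersion $C\hookrightarrow\mathcal{C}^{\circ}$ then induces a homomorphism
$$\pi^{\text{ét}}_1(C,*)\rightarrow\pi^{\text{ét}}_1(\mathcal{C}^{\circ},*).$$
It is surjective because $\mathcal{C}^{\circ}$ is normal and integral: a connected finite étale cover of $\mathcal{C}^{\circ}$ is normal, hence integral, so its restriction to the dense open $C$ remains connected.

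The plan is to show that the quotient map $\pi^{\text{ét}}_1(C,*)\twoheadrightarrow G$ from Proposition \ref{tourgeometrique} factors through this surjection. It suffices to do this for every finite quotient $G/N$, compatibly in $N$.

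Fix an open normal subgroup $N$ of $G$, and let $C'\rightarrow C$ be the corresponding connected Galois cover with group $G/N$. By Proposition \ref{étendrerevetements}, the normalization $\mathcal{C}'$ of $\mathcal{C}^{\circ}$ in $k(C')$ is a connected finite étale cover of $\mathcal{C}^{\circ}$. The group $G/N$ acts on $k(C')$ over $k(C)$, hence on $\mathcal{C}'$ by functoriality of normalization. Since
$$\deg(\mathcal{C}'/\mathcal{C}^{\circ})=[k(C'):k(C)]=|G/N|,$$
the cover $\mathcal{C}'$ is Galois with group $G/N$. It therefore corresponds to a surjection $\pi^{\text{ét}}_1(\mathcal{C}^{\circ},*)\twoheadrightarrow G/N$. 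Its restriction to the generic fiber is $C'$, so composing with $\pi^{\text{ét}}_1(C,*)\twoheadrightarrow\pi^{\text{ét}}_1(\mathcal{C}^{\circ},*)$ recovers the given map to $G/N$.

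Equivalently, the open subgroup of $\pi^{\text{ét}}_1(C,*)$ cut out by $C'$ contains the kernel of $\pi^{\text{ét}}_1(C,*)\rightarrow\pi^{\text{ét}}_1(\mathcal{C}^{\circ},*)$. Intersecting over all $N$, this kernel is contained in $\mathrm{Ker}(\pi^{\text{ét}}_1(C,*)\rightarrow G)$, which gives a continuous surjection $\pi^{\text{ét}}_1(\mathcal{C}^{\circ},*)\twoheadrightarrow G$.

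The only point needing care is compatibility in $N$. For $N'\subset N$, the cover $\mathcal{C}'_{N'}\rightarrow\mathcal{C}^{\circ}$ factors through $\mathcal{C}'_N$, by the universal property of normalization applied to $k(C'_N)\subset k(C'_{N'})$. This is exactly the compatibility statement in Proposition \ref{étendrerevetements}, so the inverse system of finite quotients, and hence the limit, is well defined.

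Regularity and integrality of $\mathcal{C}^{\circ}$ are inherited from $\mathcal{C}$ as an open subscheme, with $\mathcal{C}^{\circ}$ nonempty since it contains $C$. I expect no real obstacle here; all the substance lies in Proposition \ref{étendrerevetements}, namely étaleness of the normalization over the fibers above the primes of $S$.
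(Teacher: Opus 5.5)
Your argument is correct and matches the paper's own deduction. The paper gives no separate proof: it reads the corollary off directly from Proposition \ref{étendrerevetements}, where the compatible connected finite étale extensions $\mathcal{C}'\to\mathcal{C}^{\circ}$ of the covers $C'\to C$ yield the surjection onto $G=\varprojlim G/N$; your kernel formulation simply makes the base-point bookkeeping explicit. One small imprecision: the generic fiber $C$ is not an open subscheme of $\mathcal{C}^{\circ}$ (it is the intersection of the opens $\mathcal{C}^{\circ}[1/n]$), but this does not affect your surjectivity argument, since the generic fiber of an integral scheme flat over $\mathrm{Spec}\ \Z$ is a nonempty localization and hence still integral.
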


We now prove Proposition \ref{étendrerevetements}. We proceed in three steps. First, we show that the étale covers of the generic fiber extend to étale covers of the model $\mathcal{C}_{\Z_2}$ constructed in Proposition \ref{modelsemistable}. Second, we show that these covers extend to étale covers of the base change to $\Z_2$ of the proper, regular minimal model of $C$. Finally, we extend these covers to étale covers of $\mathcal{C}^{\circ}$ using Abhyankar's lemma and the section $s$ extending $P$ at the primes of bad reduction.
\begin{proof}
Let $N\triangleleft G$ be a finite index normal subgroup.
The quotient $G/N$ determines a connected finite \'etale Galois
cover $C'\to C$. Let $$\mathcal{C}'\rightarrow \mathcal{C}^{\circ}$$ be the normalization of $\mathcal{C}^{\circ}$ in the function field $k(C')$ of $C'$. We have to show that the morphism $\mathcal{C}'\rightarrow \mathcal{C}^{\circ}$ is unramified in codimension~$1$.

Above $2$, consider the semistable model $\mathcal C_{\Z_2}$
of Proposition \ref{modelsemistable}, with special fiber $\mathcal C_{\mathbb{F}_2}$.
By construction, the geometric quotient
\[
\pi_1^{\text{ét}}(C_{\overline{\Q}_2},*)
\twoheadrightarrow G/N
\]
factors through
\[
\pi_1^{\text{ét}}(\mathcal C_{\overline{\mathbb{F}}_2},*)
\twoheadrightarrow G/N.
\]
Since specialization is surjective and compatible with the Galois
actions, this last quotient is $G_{\mathbb{F}_2}$-invariant.
Using the specialization of $P$, it therefore extends to a quotient
\[
\pi_1^{\text{ét}}(\mathcal C_{\mathbb{F}_2},*)
\twoheadrightarrow G/N
\]
which kills the specialized section.
The equivalence between finite étale covers of a proper scheme over
a henselian local ring and those of its special fiber yields a finite
étale cover of $\mathcal C_{\Z_2}$.
Its generic fiber is isomorphic to $C'_{\Q_2}$, since the
corresponding representations agree on the geometric fundamental
group and both kill the section associated with $P$.

To compare this model with the proper regular minimal model
$\mathcal C$, choose a regular proper model over $\Z_2$
dominating both $\mathcal C_{\Z_2}$ and
$\mathcal C\times_{\Z}\Z_2$.
The pullback of the preceding cover to this common model is finite
étale. The morphism to $\mathcal C\times_{\Z}\Z_2$
is an isomorphism over an open subset containing every
codimension-one point. Consequently, the normalization of
$\mathcal C\times_{\Z}\Z_2$ in the function field
of $C'_{\Q_2}$ is unramified in codimension one; hence, it is finite
étale by purity of the branch locus.
It follows that $\mathcal C'\to\mathcal C^{\circ}$ is unramified
above $2$.
\

If $p$ is an odd prime, let $E$ be the irreducible
component of $\mathcal C_p$ containing $s(p)$.
The morphism $\mathcal C\to\operatorname{Spec}\mathbf Z$ is smooth
at $s(p)$. Thus $E$ is the unique component through $s(p)$
and has multiplicity one. Let $A$ be the strict henselization of
$\mathcal O_{\mathcal C,s(p)}$.
The cover induced by $C'\to C$ over $\mathrm{Spec}\ A[1/p]$
is tamely ramified along the regular divisor defined by $p$,
since its Galois group $G/N$ is a $2$-group.
By Abhyankar's lemma, it is a disjoint union of covers of the form
\[
\mathrm{Spec}\ \bigl(A[1/p][z]/(z^e-p)\bigr),
\]
where $e$ is a power of $2$.
Pulling back along the section gives a trivial cover, because
$P$ splits completely in $C'\to C$.
However, $z^e-p$ is Eisenstein over the strictly henselian
discrete valuation ring of the section, so this is possible
only if $e=1$.
Consequently, $\mathcal C'\to\mathcal C^{\circ}$ is unramified
along $E$.
\

If $p\notin S$, good reduction implies that $E$ is the entire
fiber $\mathcal C_p$.
If $p\in S\setminus\{2\}$, it is the only component retained
in $\mathcal C^{\circ}_p$.
Together with the argument above $2$, this proves that the
normalization is unramified along every vertical divisor of
$\mathcal C^{\circ}$.
It is also unramified along every horizontal divisor, since
$C'\to C$ is étale.

Since $\mathcal C^{\circ}$ is regular, purity of
the branch locus now shows that this morphism is finite étale.
Moreover, $\mathcal C'$ is integral, hence connected.
Finally, the uniqueness of normalization makes these extensions
compatible with the transition morphisms between the finite
quotients of $G$.
\end{proof}

\section{A proper example}

The boundary $D^{\circ} := \mathcal{C}\setminus \mathcal{C}^{\circ}$ of the arithmetic surface $\mathcal{C}^{\circ}$ in the proper, regular
minimal model $\mathcal{C}$ of the curve $C$ defined by \ref{courbeexplicite} is supported above odd primes in
$S$. We can eliminate the inertia along this boundary by adding relations to the
infinite pro-$2$ group $G$: however, adding these relations could make the quotient
finite. Still, we can bound the inertia by taking powers of these relations: using
large enough powers, Vinberg's version of the Golod--Shafarevich theorem implies that
the quotient is infinite.

\begin{theorem}\label{properexample}
There exists a proper, regular, integral arithmetic surface
\[
f:X\longrightarrow\operatorname{Spec}\Z
\]
with a section, whose étale fundamental group admits an infinite
pro-$2$ quotient. Its generic fiber $X_\Q$ is a smooth, projective,
geometrically connected finite étale cover of the curve $C$ defined
by \eqref{courbeexplicite}.
\end{theorem}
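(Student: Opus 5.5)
The plan is to pass from the open surface $\mathcal C^{\circ}$ of Section 5 to a proper surface. I will kill the inertia along the boundary $D^{\circ}$ by adding large $2$-power relations to $G$, using Lemma \ref{powersofinertia}. I will then take $X$ to be the normalization of $\mathcal C$ in a suitable finite étale cover $C_0\to C$, chosen so that the tower of Section 5 becomes unramified over all of $X$.

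First, I need a presentation of $G$ with Golod--Shafarevich polynomial negative somewhere on $(0,1)$. From the proof of Proposition \ref{tourgeometrique}, $G$ has $d=g$ generators and $r\le 2g(\mathrm{Card}(S)+1)$ relations, all of depth at least $2$, since they lie in $\Phi(F_g)$. Hence
\[
P_G(t)\le 1-gt+2g(\mathrm{Card}(S)+1)t^2 .
\]
At $t=1/(4(\mathrm{Card}(S)+1))$ this bound equals $1-g/(8(\mathrm{Card}(S)+1))$, which is negative by hypothesis (3). Next, the boundary $D^{\circ}$ has finitely many components $E_1,\dots,E_b$, lying over odd $p\in S$. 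For each $E_j$, fix a choice of inertia generator $\tau_j\in G$, namely the image of a generator of the tame inertia at the generic point of $E_j$ (or at a henselization there). The pro-$2$ part of tame inertia is procyclic, so one element per component suffices; conjugates are killed automatically by normal closure. Lemma \ref{powersofinertia} then gives $M=2^n$ such that $G_M:=G/\overline{\langle\!\langle \tau_1^M,\dots,\tau_b^M\rangle\!\rangle}$ is infinite. The quotient map $\pi_1^{\text{ét}}(\mathcal C^{\circ},*)\twoheadrightarrow G\twoheadrightarrow G_M$ corresponds to a tower of covers of $C$ extending to $\mathcal C^{\circ}$ (Proposition \ref{étendrerevetements}) whose ramification index along each $E_j$ divides $M$.

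Second, I absorb this bounded ramification into the base. Let $N_M$ be the intersection of all open normal subgroups of $G_M$ of index dividing some fixed bound, or more simply choose an open normal subgroup $N\triangleleft G_M$ not containing any nontrivial $\tau_j^{a}$ with $a<M$ unless necessary. The cleanest choice is this: since each inertia group in $G_M$ has order dividing $M$, pick an open normal $N$ meeting every inertia subgroup trivially. This is possible only if inertia groups are finite, which they are, but the inertia groups at points over $E_j$ are conjugates, so intersecting with $N$ requires care; I would instead choose $N$ deep in the lower $2$-central series so that $N\cap\langle\tau_j\rangle$ is controlled. Let $C_0\to C$ be the cover attached to $G_M/N$, and let $X$ be the normalization of $\mathcal C$ in $k(C_0)$, resolved to be regular if needed. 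By Abhyankar's lemma, the covers in the tower corresponding to subgroups of $N$ become unramified along the components of $X$ above $E_j$ once all of the inertia has been absorbed in $C_0$. Away from $D^{\circ}$, Proposition \ref{étendrerevetements} already gives étaleness. Purity on the regular $X$ then shows that the tower $N\supset\cdots$ is étale over $X$, so $\pi_1^{\text{ét}}(X,*)\twoheadrightarrow N$, which is infinite as an open subgroup of the infinite group $G_M$. The section comes from $P$: it splits completely in $C'\to C$ because the quotient kills the section, so it lifts to $C_0$ and extends to $X$ by properness.

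The main obstacle is the second step. Abhyankar's lemma requires that the ramification along each boundary component be absorbed by $C_0$ precisely, that is, the ramification index of $C_0$ along every component over $E_j$ must be a multiple of that of each higher cover. The issue is that inertia in the tower is bounded by $M$ but need not be realized in $C_0$. I would first try to ensure this by choosing $N$ to contain no nontrivial element of any conjugate of $\langle\tau_j\rangle$. This is possible because finitely many conjugacy classes of finite cyclic subgroups of a pro-$2$ group can be separated from $1$ by an open normal subgroup. If that fails, I would fall back on tame base change by $z^M=p$-type covers composed with normalization, followed by a regular resolution of the resulting tame quotient singularities.
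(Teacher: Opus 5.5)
Your first step is sound and agrees with the paper: your $t=1/(4(\operatorname{Card}(S)+1))$ works as well as the paper's $t=2/g$. The overall architecture (kill large $2$-powers of inertia, pass to an open normal subgroup absorbing the remaining bounded inertia, resolve, lift the section) is also the paper's. The gap is in the second step, and it is not the one you flag.

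Choosing $N$ with $N\cap\langle\tau_j\rangle=\{1\}$ (hence for all conjugates, $N$ being normal) only makes the tower unramified at the codimension-one points of $Y$, the normalization of $\mathcal C$ in $k(C_0)$, lying over the generic points of the $E_j$. But $Y$ is merely normal, so purity is not available on $Y$. Once you resolve $X\to Y$, the exceptional curves are new codimension-one points of $X$ along which the pulled-back tower may ramify. Abhyankar's lemma at the generic points of the $E_j$ says nothing about them, so your purity argument on $X$ breaks down.

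What controls these points is, for a closed point $x$ of the boundary, the image $J_x\subset G_M$ of $\pi_1^{\text{ét}}(U_x,*)^{(2)}$, where $U_x=\mathcal C^{\circ}\times_{\mathcal C}\operatorname{Spec}\mathcal O^{\mathrm{sh}}_{\mathcal C,x}$.
\begin{itemize}
\item At a transversal crossing of two removed components, $J_x$ is a quotient of $(\Z/M\Z)^2$. Your $N$ can meet it nontrivially (say in $\tau_1'(\tau_2')^{-1}$, for local inertia generators $\tau_1',\tau_2'$) while missing both cyclic inertia groups. Then $Y$ has a quotient singularity over $x$. The deeper covers are étale in codimension one over it but not étale, and they do ramify along the exceptional curves of its resolution.
\item Worse, $D^{\circ}$ need not be a normal crossings divisor: removed components of a fibre of the minimal model may be singular, mutually tangent, or meet three at a point. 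At such $x$ Abhyankar's lemma does not apply, and killing the $\tau_j^M$ does not bound the order of inertia along divisors exceptional over $x$. So $J_x$ may be infinite, in which case no open $N$ meets it trivially.
\end{itemize}

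The paper's argument is built to handle exactly this. It first blows up $\mathcal C$ outside $\mathcal C^{\circ}$ to a regular $\mathcal B$ whose reduced boundary $D=D_1\cup\dots\cup D_b$ is strict normal crossings. It takes inertia generators $\tau_i$ for \emph{all} the $D_i$, including the exceptional ones. Abhyankar's lemma then gives $\pi_1^{\text{ét}}(U_x,*)^{(2)}\simeq\Z_2^{\,r}$ with $r\le 2$ at every geometric point of $D$, so each $J_x$ is a finite quotient of $(\Z/M\Z)^r$. Lemma \ref{finiteinertia} then chooses $H$ with $H\cap J_x=\{1\}$ for these finitely many conjugacy classes of groups, \emph{including those at the crossing points}. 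Hence $Y_K\to Y_H$ is finite étale at every point, not merely in codimension one. Only afterwards is $Y_H$ resolved, and the étale covers are simply pulled back rather than relying on purity on the resolution.

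Your fallback does not repair this either. A base change adjoining $p^{1/M}$ would destroy the section over $\operatorname{Spec}\Z$ and the geometric connectedness of $X_{\Q}$. Resolving tame quotient singularities afterwards reintroduces exceptional curves along which nothing is controlled.
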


\begin{proof}
By Section~5, we have a surjective morphism
\[
\pi_1^{\text{ét}}(\mathcal C^{\circ},*)\twoheadrightarrow G.
\]
The presentation constructed in Lemma~\ref{lemmaequivariantquotient}
has $g$ generators and at most $$2g\cdot(\operatorname{Card}(S)+1)$$
relations, all of depth at least $2$. Thus
\[
P_G(2/g)\leq -1+\frac{8(\operatorname{Card}(S)+1)}{g}<0.
\]

By resolving the boundary, choose a proper regular model
$\mathcal B\to\mathcal C$ which is an isomorphism over
$\mathcal C^{\circ}$ and for which the reduced boundary
\[
D:=\mathcal B\setminus\mathcal C^{\circ}=D_1\cup\cdots\cup D_b
\]
is a strict normal crossings divisor. It is still supported above
odd primes. The inertia images in $G$ along the $D_i$ are therefore
procyclic; choose generators $\tau_i$ of these images.
By Lemma~\ref{powersofinertia}, for $M=2^n$ large enough the quotient
\[
G_M=G/\overline{\langle\!\langle
\tau_1^M,\ldots,\tau_b^M
\rangle\!\rangle}
\]
is infinite. Its inertia groups along $D$ have orders dividing $M$.

Apply Lemma~\ref{finiteinertia} to this quotient. We obtain an open
normal subgroup $H$ of $G_M$ and a proper normal surface $Y$
with a surjection
\[
\pi_1^{\text{ét}}(Y,*)\twoheadrightarrow H.
\]
The group $H$ is infinite, since it has finite index in $G_M$.

Let $U\subset Y$ be the finite étale cover of $\mathcal C^{\circ}$
corresponding to $H$. Choose a resolution $X\to Y$ that is an
isomorphism over the regular open subscheme $U$. The pullbacks
of the covers $Y'\to Y$ are finite étale and remain connected,
since their generic fibers are connected. They therefore give
\[
\pi_1^{\text{ét}}(X,*)\twoheadrightarrow H.
\]
The surface $X$ is proper, regular, integral, and flat over $\mathrm{Spec}\ \Z$.

Finally, $P$ splits completely in every cover of the original
tower. The section $s$ therefore lifts to $U$, and hence to $X$.
The generic fiber $X_\Q$ is the finite étale cover of $C$
corresponding to $H$. It is geometrically connected because the
restriction of $\pi_1^{\text{ét}}(C,*)\twoheadrightarrow G_M$ to
the geometric fundamental group is surjective.
\end{proof}

Finally, we prove the lemma.

\begin{lemma}\label{finiteinertia}
Let $\mathcal B$ be a proper, regular, integral arithmetic surface,
and let $D$ be a reduced strict normal crossings divisor supported
above odd primes. Write $U:=\mathcal B\setminus D$, and let $Q$ be
a pro-$2$ quotient of $\pi_1^{\text{ét}}(U,*)$ whose inertia groups
along the components of $D$ are finite. For an open normal subgroup
$L$ of $Q$, denote by $Y_L$ the normalization of $\mathcal B$ in the
function field of the connected cover of $U$ corresponding to $L$.

There exists an open normal subgroup $H$ of $Q$ such that
$Y_K\to Y_H$ is finite étale for every open normal subgroup
$K$ of $Q$ contained in $H$. In particular,
\[
\pi_1^{\text{ét}}(Y_H,*)\twoheadrightarrow H.
\]
\end{lemma}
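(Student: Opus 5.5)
Let $I_1,\ldots,I_b\subset Q$ denote the inertia groups (defined up to conjugacy) along the components $D_1,\ldots,D_b$ of $D$, and more generally the inertia groups at the generic points of the strata of $D$. Since $D$ lies above odd primes and $Q$ is pro-$2$, all ramification is tame. By Abhyankar's lemma for a strict normal crossings divisor on a regular scheme, at a closed point where $D_i$ and $D_j$ cross, the inertia group is abelian and generated by the images of $I_i$ and $I_j$. Every such group is a quotient of $\widehat{\Z}'(1)^2$ and is finite by hypothesis. There are only finitely many conjugacy classes of such inertia groups to consider: one per component and one per crossing point, and $D$ has finitely many crossing points since $\mathcal B$ is a surface. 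The plan is to pick $H$ open normal in $Q$ meeting every one of these finitely many finite inertia groups trivially. This is possible because $Q$ is profinite: for each finite subgroup $I$, the open normal subgroups form a basis at the identity, so some open normal subgroup meets $I$ only in $e$. Intersecting finitely many such subgroups gives $H$. Since $H$ is normal, it meets every conjugate of these groups trivially too.

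Next we show that $Y_K\to Y_H$ is finite étale for $K\subset H$ open normal in $Q$. Note that $Y_K\to Y_H$ is Galois with group $H/K$ over the open part. The inertia of this cover at a point $y$ of $Y_K$ is $I_y\cap H$ modulo $K$, where $I_y\subset Q$ is the inertia group of the corresponding point for the cover $Y_K\to\mathcal B$. By the choice of $H$, these intersections are trivial at every point above $D$. For a finite Galois cover of normal schemes, triviality of all inertia groups means the cover is étale: the action of $H/K$ is free, so $Y_K\to Y_K/(H/K)=Y_H$ is étale. That $Y_K/(H/K)=Y_H$ follows from uniqueness of normalization.

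The main obstacle is the control of inertia at the closed crossing points of $D$. Purity of the branch locus is not available because $Y_H$ need not be regular; it is only normal, with possibly quotient-type tame singularities above the crossing points. This is why we use the freeness of the action instead of purity. To do this correctly, we must identify the inertia at a closed point $y$ with a subgroup of the tame local fundamental group of the strict henselization of $\mathcal B$ at the crossing point. By Abhyankar's lemma, the image of that group in $Q$ is generated by the images of the two local branch inertias, so it is finite. We then include it in the finite list of subgroups avoided by $H$.

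Finally, let $K$ range over the open normal subgroups of $Q$ contained in $H$. The covers $Y_K\to Y_H$ are connected, since $Y_K$ is integral, and finite étale with group $H/K$. By uniqueness of normalization they form a compatible system. This gives continuous surjections $\pi_1^{\text{ét}}(Y_H,*)\twoheadrightarrow H/K$, and passing to the inverse limit gives $\pi_1^{\text{ét}}(Y_H,*)\twoheadrightarrow H$.
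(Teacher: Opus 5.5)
Your proposal is correct and follows essentially the paper's route: you choose an open normal $H$ meeting trivially the finitely many (up to conjugacy) finite local inertia images given by Abhyankar's lemma at the generic points of the components and at the crossing points, and you deduce that $Y_K\to Y_H$ is étale everywhere, including over the crossing points. The only difference is packaging: you conclude via the SGA~1 criterion that a finite group acting on a normal scheme with trivial inertia has an étale quotient map, applied to $Y_K\to Y_K/(H/K)=Y_H$, whereas the paper compares the two covers directly over the punctured strict henselizations $U_x$ and uses that normalization commutes with strict henselization.
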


\begin{proof}
Choose a power $M$ of $2$ divisible by the orders of all the generic
inertia groups. For a geometric point $x$ of $D$, let
\[
U_x:=U\times_{\mathcal B}
\operatorname{Spec}\mathcal O_{\mathcal B,x}^{\mathrm{sh}}.
\]
If $r$ components of $D$ pass through $x$, Abhyankar's lemma
\cite[Exp.~XIII, App.~I, Prop.~5.2]{SGA1-rev} gives
\[
\pi_1^{\text{ét}}(U_x,*)^{(2)}\simeq\Z_2^{\,r},
\qquad r\leq 2,
\]
with generators corresponding to the inertia along these components.
Its image $J_x$ in $Q$ is thus a quotient of $(\Z/M\Z)^r$.
Up to conjugation, there are only finitely many such groups: the
generic inertia groups and the groups at the finitely many crossing
points. We can therefore choose an open normal subgroup $H$ of $Q$
such that $H\cap J_x=\{1\}$ for every $x$.

For $K\subset H$ as in the statement, the maps from
$\pi_1^{\text{ét}}(U_x,*)$ to $Q/K$ and $Q/H$ have the same kernel.
Consequently, over $U_x$ the corresponding covers have isomorphic
connected components. Normalization commutes with strict
henselization, so $Y_K\to Y_H$ becomes a disjoint union of
isomorphisms locally above $x$. It is therefore finite étale,
including above the crossing points. These connected covers form
a compatible tower and give the asserted quotient.
\end{proof}

\bibliographystyle{alpha}
\bibliography{biblio_corrigee}
\end{document}